\begin{document}

\title{Smooth distributions on subcartesian
spaces are globally finitely generated
}


\author{Qianqian Xia          
}


\institute{Qianqian Xia \at
           Nanjing University of Information Science \& Technology, Nanjing, 210044, China\\
              \email{inertialtec@sina.com}
}

\date{Received: date / Accepted: date}

\maketitle

\begin{abstract}
We prove that a connected subcartesian space admits embedding in a Euclidean space.  The Whitney Embedding Theorem is then stated as a corollary of our result. Based on the above result together with the theory of distribution on smooth manifolds,  we show that smooth generalized distributions on connected subcartesian spaces are globally finitely generated. We also show that smooth generalized subbundles of vector bundles on connected subcartesian spaces are globally finitely generated.
\keywords{ Subcartesian space\and Embedding and Generalized distribution}
\end{abstract}

\section{Introduction}
\label{intro}
Distribution theory has its application in many fields such as geometric mechanics and control \cite{11,1}. Given a smooth manifold $M$, by  assigning to each point $x\in M$ a subspace of $T_xM$, we get a generalized distribution $D$. Sussmann and Stefan considered a generalized distribution $D$ to be smooth if for each $x\in M$, there are locally defined smooth vector fields that are sections of $D$ whose values at $x$ span $D_x$. If $E$ is a vector bundle over $M$, a generalized subbunlde $F$ of $E$ is an assignment $x\rightarrow F_x$ of a subspace of $F_x$ of the fiber $E_x$ of $E$ over $x$, for each $x \in M$. $F$ is said to be smooth if for each $x \in M$, there are locally defined smooth sections of $F$ whose value at $x$ span $F_x$.

In \cite{1},  Lewis and Bullo give an interesting discussion of generalized distributions and generalized subbundles. A question whether a generalized subbundle has a finite set of global generators is proposed there.

In \cite{3}, Drager et.al. show that any smooth generalized distribution on a smooth manifold is globally finitely generated, that is, there
are finitely many globally defined sections whose values span the generalized distribution at each point. The result in \cite{3} provides a complete answer to the conjecture which was first proposed in \cite{1}. A more recently detailed discussion on this topic can be found in \cite{2}.

The existing results on distribution theory are restricted to smooth manifolds, since differential geometry was traditionally regarded as the study of smooth manifolds. However,  there has long been perceived the need for an extension of the framework of smooth manifolds in differential geometry,
which is too restrictive and does not admit certain basic geometric intuitions. Sikorski¡¯s theory of differential spaces \cite{6,7} studies the differential geometry of a large class of singular spaces which both contains the theory of manifolds and allows the investigation of singularities. It is the investigation of geometry in terms of differentiable functions. A similar expansion occurred in the realm of algebra geometry, which is the investigation of geometry in terms of polynomials. The  difference between the two theories is in the choice of the space of functions.  A differential space $S$ is said to be subcartesian \cite{4, 5} if every point of $S$ has a neighbourhood diffeomorphic to a subset of some Cartesian space $\mathbb{R}^n$. It  has been shown in \cite{5} that the orbit space of a proper action of a connected Lie group on a smooth manifold is a subcartesian space. The theory of subcartesian spaces has been developed by \'Sniatycki et.al. in recent few years \cite{12, 13, 14, 15, 16}. See \cite{5} for a systematic treatment on this topic.

 Given a differential space $S$, the tangent space $T_xS$ of $S$ is defined as the set of all derivations of $C^{\infty}(S)$ at $x\in S$ and the tangent bundle $TS$ is the union  of all tangent spaces to $S$ at all points in $S$. $TS$ is a differential space with the differential structure $C^{\infty}(TS)$ inherited from $C^{\infty}(S)$.

Analogous to the terminology in \cite{1,3,2}, a generalized distribution on a subcartesian space $S$ is a subset $D \subseteq TS$ such that for each $x \in S$, the intersection $D_x=D \cap T_xS$ is a vector subspace of $T_xS$. A generalized distribution on  $S$ is smooth, if for any $v_x \in T_xS$, there exist a  neighborhood $U$ of $x$ and a local smooth section $X \in \Gamma(TS;U)$ such that $X(x)=v_x$ and $X(y)\in D(y)$ for each $y \in U$. We say that a generalized distribution $D$ on $S$ is globally finitely generated, if there exist derivations $X_1, \cdots, X_l$ of $C^{\infty}(S)$ such that $X_1(x), \cdots, X_l(x)$ span $D_x$ for each $x \in S$.

Analogous to the definition of vector bundle on smooth manifolds, a vector bundle $E$ on a subcartesian space $S$ is a smooth map between subcartesian spaces which has the local product property with respect to $\mathbb{R}^k$. Besides, for each $x\in S$, fiber $E_x$ is a $k$-dimensional real vector space. A generalized subbundle $F$ of a vector bundle  $E$ on a subcartesian space $S$ is a subset of $E$, such that $F_x=E_x\cap F$ is a subspace of $E$. A generalized subbundle $F$  of $E$ is smooth, if for any $e_x \in F_x$, there exist a neighborhood $U$ of $x$ and a local smooth section $\sigma: U\rightarrow E$ such that $\sigma(x)=e_x$ and $\sigma(y)\in F_y$ for each $y \in U$. We say that a smooth generalized subbundle $F$ on $S$ is globally finitely generated, if there exist global smooth sections $\sigma_1, \cdots, \sigma_l$ of $F$ such that $\sigma_1(x), \cdots, \sigma_l(x)$ span $F_x$ for each $x \in S$.

In this paper, we study distribution theory on subcartesian space. We first present embedding theorem for subcartesian space.  We show that every connected subcartesian space admits a smooth embedding into a Euclidean space, where the smoothness is within the category of differential spaces.
We provide a complete proof  which has not been found in the existing literature. We then present the Whitney Embedding Theorem as a corollary of our embedding theorem here. Based on this, together with Theorem 3.1.6 in \cite{5} and the main theorem in \cite{3}, we show that every smooth generalized distribution on a connected subcartesian space is globally finitely generated. We also show that  every smooth generalized subbundle on a connected subcartesian space is globally finitely generated.
\section{Preliminaries}
\label{sec:1}
\begin{definition}\label{def3}
A differential structure on a topological space $S$ is a family $C^{\infty}(S)$ of real-valued functions on $S$ satisfying the following conditions:

1. The family
\[\{f^{-1}(I)|f\in C^{\infty}(S) \text{and} \, I \, \text{is an open interval in} \, \mathbb{R}\}\]
is a subbasis for the topology of $S$.

2. If $f_1, \cdots, f_n \in C^{\infty}(S)$ and $F \in C^{\infty}(\mathbb{R}^n)$, then $F(f_1, \cdots, f_n) \in C^{\infty}(S)$.

3. If $f: S\rightarrow \mathbb{R}$ is a function such that, for every $x \in S$, there exist an open neighborhood $U$ of $x$,
and a function $f_x \in C^{\infty}(S)$ satisfying
\[f_x|U=f|U,\]
then $f \in C^{\infty}(S)$. Here, the subscript vertical bar $|$ denotes a restriction.
\end{definition}
\begin{proposition}\label{prop1}
Let $S$ be a differential space. Then for any open set $U \subseteq S$, for each $x \in U$, there exists an open subset $V \subseteq U$ that contains $x$ such that $cl(V) \subseteq U$. Here $cl(V)$ denotes the closure of the subset $V$ in $S$.
\end{proposition}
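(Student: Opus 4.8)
The plan is to recognize that this proposition is exactly the statement that $S$ is regular as a topological space, and to prove it using only the subbasis description of the topology from condition~1 of Definition~\ref{def3} together with continuity of the generating functions. The first fact I would record is that every $f \in C^{\infty}(S)$ is continuous: by condition~1 the set $f^{-1}(I)$ is open for every open interval $I$, and since the open intervals form a basis of $\mathbb{R}$ and preimages commute with unions and intersections, $f^{-1}(W)$ is open for every open $W \subseteq \mathbb{R}$; consequently $f^{-1}(C)$ is closed for every closed $C \subseteq \mathbb{R}$.

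Next, fix an open set $U \subseteq S$ and a point $x \in U$. Because the sets $f^{-1}(I)$ form a subbasis, their finite intersections form a basis of the topology, so I can choose finitely many functions $f_1, \dots, f_n \in C^{\infty}(S)$ and open intervals $I_1, \dots, I_n$ with $x \in \bigcap_{j=1}^n f_j^{-1}(I_j) \subseteq U$. For each $j$ we have $f_j(x) \in I_j$, and since $I_j$ is open I can pick a closed interval $J_j$ with $f_j(x) \in \mathrm{int}(J_j)$ and $J_j \subseteq I_j$ (for instance, a small symmetric closed interval about $f_j(x)$).

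I would then set $V = \bigcap_{j=1}^n f_j^{-1}(\mathrm{int}(J_j))$. This is a finite intersection of open sets, hence open, and it contains $x$ since $f_j(x) \in \mathrm{int}(J_j)$ for each $j$. To bound the closure, note that $V \subseteq f_j^{-1}(\mathrm{int}(J_j)) \subseteq f_j^{-1}(J_j)$ for every $j$, and $f_j^{-1}(J_j)$ is closed by the continuity recorded above; therefore $cl(V) \subseteq f_j^{-1}(J_j)$ for each $j$. Intersecting over $j$ gives $cl(V) \subseteq \bigcap_{j=1}^n f_j^{-1}(J_j) \subseteq \bigcap_{j=1}^n f_j^{-1}(I_j) \subseteq U$, which is precisely the desired conclusion.

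The only point requiring any care is the control of the closure of a finite intersection, which I would handle through the elementary chain $cl(V) \subseteq \bigcap_j cl\big(f_j^{-1}(\mathrm{int}(J_j))\big) \subseteq \bigcap_j f_j^{-1}(J_j)$, where the first inclusion is the general fact that the closure of a subset lies inside any closed superset. I do not expect a genuine obstacle here, and I want to emphasize that the argument avoids any appeal to a local Euclidean model or to a metric on $S$; this is essential, since the embedding theorem that would otherwise supply such a model is established only later in the paper.
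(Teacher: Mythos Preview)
Your proof is correct and follows essentially the same route as the paper: choose a basic open neighborhood $\bigcap_j f_j^{-1}(I_j)$ of $x$ inside $U$, shrink each interval $I_j$ to a closed subinterval $J_j$ whose interior still contains $f_j(x)$, and set $V=\bigcap_j f_j^{-1}(\mathrm{int}(J_j))$. Your treatment of the closure step is in fact slightly more careful than the paper's, which asserts the equality $cl(V)=\bigcap_j f_j^{-1}[c_j,d_j]$ where only the inclusion $\subseteq$ is generally valid (and is all that is needed).
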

\begin{proof}
According to condition 1 in Definition \ref{def3}, for any open set $U \subseteq S$ and any $x \in U$, there exist $f_1, \cdots, f_k \in C^{\infty}(S)$ and intervals
$(a_1, b_1), \cdots, (a_k,b_k)$ in $\mathbb{R}$, such that $f_1^{-1}(a_1,b_1)\cap\cdots\cap f_k^{-1}(a_k,b_k) \subseteq U$ contains $x$. Let $[c_1, d_1] \subseteq (a_1, b_1), \cdots, [c_k,d_k] \subseteq (a_k, b_k)$ be such that $f_1(x)\in (c_1,d_1), \cdots, f_k(x) \in (c_k, d_k)$.
Consider the subset $V=f_1^{-1}(c_1,d_1)\cap\cdots\cap f_k^{-1}(c_k,d_k)$. Then $x \in V$. It follows from condition 1 in Definition \ref{def3} that $V$ is open. Beside, $cl(V)=f_1^{-1}[c_1,d_1]\cap\cdots\cap f_k^{-1}[c_k,d_k] \subseteq f_1^{-1}(a_1,b_1)\cap\cdots\cap f_k^{-1}(a_k,b_k) \subseteq U$. Hence the result follows immediately.
\end{proof}
\begin{definition}
A map $\phi: R \rightarrow S$ is $C^{\infty}$ if $\phi^*f=f\circ \phi \in C^{\infty}(R)$ for
every $f \in C^{\infty}(S)$. A $C^{\infty}$ map $\phi$ between differential spaces is a $C^{\infty}$ diffeomorphism
if it is invertible and its inverse is $C^{\infty}$.
\end{definition}
\begin{definition}\label{def2}
A differential space $S$ is subcartesian if it is Hausdorff and every point $x \in S$ has a  neighbourhood $U$ diffeomorphic to a subset $V$ of $\mathbb{R}^n$ by the map $\phi: U\rightarrow \mathbb{R}^n$. $(U, \mathbb{R}^n, \phi)$ is said to be a local chart of $S$.  The subcartesian space
is said to be locally connected if for every point $x \in S$, there exists a connected neighborhood $U$ such that $(U, \mathbb{R}^n, \phi)$ is a local chart of $S$.
\end{definition}
\begin{definition}
Let $S$ be a subcartesian space. The structural dimension of $S$ at a point $x\in S$ is the smallest integer $n_x$ such that for some
open neighborhood $U \subseteq S$ of $x$, there is a diffeomorphism of $U$ onto a subset $V \subseteq \mathbb{R}^{n_x}$. The structural dimension of
$S$ is the smallest integer $n$ such that for every point $x \in S$, the structural dimension $n_x$ of $S$ at $x$ satisfies that $n_x\leq n$.
\end{definition}
\begin{definition}
Let $f\in C^{\infty}(S)$. For each $x \in S$,  the smooth function $g$  on $\mathbb{R}^n$ is said to be a smooth extension of $f$ at $x \in S$, if
there exists a local chart $(U, \mathbb{R}^n, \phi)$  of $S$ with $x \in U$ such that $g\circ \phi|U=f|U$.
\end{definition}
Given any smooth function on $S$ and any $x \in S$, the existence of smooth extension is ensured by the above definition of subcartesian space.
\begin{lemma}\cite{5}
For every open subset $U$ of a differential space $S$ and every $x \in U$, there exists $f \in C^{\infty}(S)$
satisfying $f|V=1$ for some neighbourhood
$V$ of $x$ contained in $U$, and $f|W=0$ for some open subset $W$ of $S$ such that
$U\cup W=S$.
\end{lemma}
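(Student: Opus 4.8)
The plan is to manufacture $f$ as a finite product of one-variable bump functions pulled back along generators of the differential structure, using the subbasis condition~1 of Definition~\ref{def3} to localize and condition~2 to guarantee smoothness. First, since $U$ is open and $x\in U$, condition~1 produces finitely many $f_1,\dots,f_k\in C^{\infty}(S)$ and open intervals $(a_i,b_i)$ with
\[
x\in\bigcap_{i=1}^{k}f_i^{-1}(a_i,b_i)\subseteq U,
\]
and I set $c_i=f_i(x)\in(a_i,b_i)$. Note that each $f_i$ is continuous for the topology of $S$, again by condition~1, a fact I will use to keep certain preimages open.

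Next, for each $i$ I fix reals $a_i<a_i''<a_i'<c_i<b_i'<b_i''<b_i$ and a smooth $h_i\colon\mathbb{R}\to[0,1]$ with $h_i\equiv 1$ on $[a_i',b_i']$ and $h_i\equiv 0$ off $(a_i'',b_i'')$; this is the standard bump function on the line. Taking $F(t_1,\dots,t_k)=h_1(t_1)\cdots h_k(t_k)\in C^{\infty}(\mathbb{R}^k)$, I define $f=F(f_1,\dots,f_k)$, which lies in $C^{\infty}(S)$ by condition~2.

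It then remains to verify the two required properties. For the value $1$, put $V=\bigcap_i f_i^{-1}(a_i',b_i')$: this is open, contains $x$, and satisfies $V\subseteq U$ since $(a_i',b_i')\subset(a_i,b_i)$; on $V$ each $h_i(f_i)\equiv 1$, so $f|V=1$. For the value $0$, set $W=\bigcup_i f_i^{-1}(\mathbb{R}\setminus[a_i'',b_i''])$, which is open by continuity of the $f_i$ and on which $f$ vanishes because some factor $h_i(f_i)$ is $0$ there. Finally $U\cup W=S$: if $y\notin U$ then $f_i(y)\notin(a_i,b_i)$ for some $i$, and since $a_i<a_i''$ and $b_i''<b_i$ this gives $f_i(y)\notin[a_i'',b_i'']$, i.e.\ $y\in W$.

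The only delicate point is the construction of $W$: one must place the supports of the $h_i$ \emph{strictly} inside the intervals $(a_i,b_i)$, for otherwise $f$ would vanish on $S\setminus U$ only pointwise rather than on an open set containing it, and the covering condition $U\cup W=S$ with $W$ open would fail. Everything else is a direct application of the axioms of a differential structure together with the elementary existence of smooth bump functions on $\mathbb{R}$.
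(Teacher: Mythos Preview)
Your proof is correct. The paper does not supply its own argument for this lemma---it merely cites \cite{5}---so there is nothing to compare against directly; your construction via pulled-back one-variable bump functions and condition~2 of Definition~\ref{def3} is precisely the standard proof one finds in \'Sniatycki's book, and every step (including the verification that $U\cup W=S$ via the strict nesting $[a_i'',b_i'']\subset(a_i,b_i)$) is sound.
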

\begin{lemma}\cite{5}\label{lem13}
Let $S$ be locally compact, Hausdorff and second countable. Then every open cover $\{U_\alpha\}$ of $S$ has a countable, locally finite refinement
consisting of open sets with compact closures.
\end{lemma}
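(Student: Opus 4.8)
The plan is to reproduce the classical argument that a locally compact, Hausdorff, second countable space carries such refinements: build an exhaustion by compact sets and then refine the cover band by band. First I would establish that $S$ admits a countable basis $\{B_n\}_{n\geq 1}$ consisting of open sets with compact closures. Second countability supplies a countable basis, while local compactness together with the Hausdorff property, and Proposition \ref{prop1} (which lets one shrink a neighborhood so that its closure lies inside a prescribed open set), allows one to discard the basis elements without compact closure and still retain a basis; the surviving subfamily remains countable.

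Next I would construct a compact exhaustion. Setting $G_1=B_1$, I would inductively take $G_{k+1}$ to be a finite union $B_1\cup\cdots\cup B_{m_k}$ chosen so that $\overline{G_k}\subseteq G_{k+1}$, which is possible since the compact set $\overline{G_k}$ is covered by finitely many of the $B_n$; choosing $m_k$ strictly increasing forces $\bigcup_k G_k=S$. Each $\overline{G_k}$ is then compact, being a finite union of the compact sets $\overline{B_n}$, and $\overline{G_k}\subseteq G_{k+1}$ by construction. With the convention $G_0=G_{-1}=\emptyset$, I would form the compact shells $A_k=\overline{G_k}\setminus G_{k-1}$ and the open bands $V_k=G_{k+1}\setminus\overline{G_{k-2}}$, observing that $A_k\subseteq V_k$ because every point of $A_k$ lies in $G_{k+1}$ yet outside $\overline{G_{k-2}}\subseteq G_{k-1}$.

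Given the open cover $\{U_\alpha\}$, I would refine it shell by shell. For each $k$ and each $x\in A_k$, I pick $\alpha$ with $x\in U_\alpha$ and, using local compactness together with Proposition \ref{prop1}, choose an open $W_x\ni x$ with compact closure satisfying $W_x\subseteq U_\alpha\cap V_k$. Since $A_k$ is compact, finitely many such $W_x$ cover it, and the union over all $k$ of these finite families is a countable open refinement of $\{U_\alpha\}$ by sets with compact closures.

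The step I expect to require the most care is local finiteness. The decisive observation is that every refinement set produced at level $k$ is contained in the band $V_k$, and that $V_k\cap G_m=\emptyset$ whenever $k\geq m+2$, since in that case $G_m\subseteq G_{k-2}\subseteq\overline{G_{k-2}}$ while $V_k$ misses $\overline{G_{k-2}}$. Hence any point $y\in S$ lies in some open set $G_m$, and $G_m$ can meet only the sets arising from the finitely many levels $k\leq m+1$, each contributing finitely many members; therefore $G_m$ meets only finitely many elements of the refinement. I would conclude by verifying this index bookkeeping carefully, which completes the proof.
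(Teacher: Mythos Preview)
The paper does not give its own proof of this lemma: it is simply quoted from \cite{5}, with no argument supplied. Your proposal is the standard and correct proof of this classical fact, and in fact the compact exhaustion you build is exactly Lemma~\ref{lem1} of the paper (also cited without proof), while the band-by-band refinement mirrors the construction carried out explicitly in the proof of Lemma~\ref{lem2}. So your approach is entirely in line with the tools the paper relies on.

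One small point worth tightening: Lemma~\ref{lem13} is stated for an arbitrary locally compact, Hausdorff, second countable space, not a differential space, so invoking Proposition~\ref{prop1} is formally out of scope here. The shrinking step you need---given open $U\ni x$, find open $V\ni x$ with $\overline{V}\subseteq U$ and $\overline{V}$ compact---follows directly from local compactness plus the Hausdorff property (such spaces are regular), without any appeal to the differential structure. Replacing the reference to Proposition~\ref{prop1} by this purely topological fact makes the argument self-contained at the stated generality.
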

\begin{definition}
A  countable partition of unity on a differential space $S$ is a countable family of functions $\{f_i\} \in C^{\infty}$
such that:\\
(a) The collection of their supports is locally finite.\\
(b) $f_i(x)\geq 0$ for each $i$ and each $x \in S$.\\
(c) $\sum_{i=1}^{\infty}f_i(x)=1$ for each $x \in S$.
\end{definition}
\begin{theorem}\cite{5}\label{thm2}
Let $S$ be a differential space with differential structure $C^{\infty}(S)$,
and let $\{U_\alpha\}$ be an open cover of $S$. If $S$ is Hausdorff, locally compact and second
countable, then there exists a countable partition of unity $\{f_i\} \in C^{\infty}(S)$,
subordinate to $\{U_\alpha\}$  and such that the support of each $f_i$ is compact.
\end{theorem}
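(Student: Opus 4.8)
The plan is to adapt the classical construction of smooth partitions of unity on manifolds to the differential-space setting, using the three structural tools already at hand: the regularity property of Proposition \ref{prop1}, the existence of bump functions (the unnumbered Lemma from \cite{5} immediately preceding this theorem), and the paracompactness-type refinement of Lemma \ref{lem13}. Throughout, smoothness of the functions I build will be justified by conditions 2 and 3 of Definition \ref{def3}: condition 2 lets me form polynomials, finite products, and (locally) quotients of smooth functions, while condition 3 lets me pass from local smoothness to global smoothness for the locally finite sum that appears at the end.

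First I would refine the given cover. For each $x \in S$, pick $\alpha$ with $x \in U_\alpha$; by local compactness together with Proposition \ref{prop1}, choose an open $B_x \ni x$ with $cl(B_x)$ compact and $cl(B_x) \subseteq U_\alpha$. Applying Lemma \ref{lem13} to the open cover $\{B_x\}$ yields a countable, locally finite refinement $\{V_i\}_{i=1}^\infty$ by open sets with compact closures; since each $V_i$ lies in some $B_x$, we get $cl(V_i) \subseteq U_{\alpha(i)}$ with $cl(V_i)$ compact, and $\{V_i\}$ still covers $S$.

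The decisive step---and the one I expect to be the main obstacle---is to shrink this locally finite cover: I must produce a further open cover $\{W_i\}$ with $cl(W_i) \subseteq V_i$ for every $i$. This cannot be done by shrinking each $V_i$ independently, since that may uncover points; it is the standard shrinking lemma for locally finite covers, whose proof requires separating disjoint closed sets, i.e.\ normality. Normality is available here because a Hausdorff, locally compact, second countable space is regular (Proposition \ref{prop1} already exhibits the regularity) and hence metrizable by the Urysohn metrization theorem, therefore normal; alternatively one runs the usual countable induction, at stage $n$ separating the closed leftover set $C_n = S \setminus \big(\bigcup_{i<n} W_i \cup \bigcup_{i>n} V_i\big) \subseteq V_n$ from the complement of $V_n$.

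Finally I would assemble the partition. Fix $i$. Since $cl(W_i)$ is compact, finitely many applications of the bump function lemma to the open set $V_i$ give functions $f_{i,1}, \dots, f_{i,m} \in C^{\infty}(S)$, each equal to $1$ on a neighbourhood of one of finitely many points covering $cl(W_i)$ and with support in $V_i$; after normalizing so that $0 \le f_{i,k} \le 1$, the function $g_i = 1 - \prod_{k=1}^m (1 - f_{i,k})$ is smooth by condition 2 of Definition \ref{def3}, satisfies $0 \le g_i \le 1$ and $g_i \equiv 1$ on $cl(W_i)$, and has $\operatorname{supp}(g_i) \subseteq V_i \subseteq U_{\alpha(i)}$ compact. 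Because $\{V_i\}$ is locally finite, so is $\{\operatorname{supp}(g_i)\}$, and hence $g = \sum_i g_i$ is a locally finite sum: it is smooth by conditions 2 and 3, and $g \ge 1 > 0$ everywhere since $\{W_i\}$ covers $S$ and $g_i \equiv 1$ on $W_i$. Setting $f_i = g_i / g$---smooth because $g$ is locally bounded below by a positive constant, so $1/g$ is locally a smooth reparametrization of $g$---yields the desired family: $f_i \ge 0$, $\operatorname{supp}(f_i) = \operatorname{supp}(g_i) \subseteq U_{\alpha(i)}$ is compact, the supports are locally finite, and $\sum_i f_i = g/g = 1$. Thus $\{f_i\}$ is a countable partition of unity subordinate to $\{U_\alpha\}$ with compact supports.
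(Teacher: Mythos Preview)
The paper does not supply a proof of this theorem: it is stated with a citation to \cite{5} and no argument follows (the next item is the definition of a derivation). There is therefore nothing in the paper to compare your proposal against.

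For what it is worth, your outline is the standard manifold-style argument transplanted to differential spaces, and it is essentially correct. Two small points worth tightening if you write it out in full: (i) the bump-function lemma you invoke does not assert $0\le f\le 1$, so you should say explicitly that you replace $f$ by $\chi\circ f$ for a suitable smooth $\chi:\mathbb{R}\to[0,1]$ with $\chi(0)=0$, $\chi(1)=1$ (legitimate by condition 2 of Definition \ref{def3}); and (ii) that lemma only gives $\{f\neq 0\}\subseteq V_i$, hence $\operatorname{supp}(g_i)\subseteq cl(V_i)$ rather than $\subseteq V_i$ --- but this is harmless, since in your Step 1 you already arranged $cl(V_i)\subseteq U_{\alpha(i)}$ with $cl(V_i)$ compact, which is all that subordination and compact support require. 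Your justification that $1/g\in C^\infty(S)$ can be made in one stroke: since $g\ge 1$ everywhere, $1/g=F\circ g$ for any smooth $F:\mathbb{R}\to\mathbb{R}$ agreeing with $t\mapsto 1/t$ on $[1,\infty)$, and condition 2 applies directly.
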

\begin{definition}
A derivation of $C^{\infty}(S)$ is a linear map
\[X: C^{\infty}(S)\rightarrow C^{\infty}(S) : f \rightarrow X(f)\]
satisfying Leibniz¡®s rule
\[X(f_1f_2)=X(f_1)f_2+f_1X(f_2)\]
for every $f_1, f_2\in C^{\infty}(S)$.
\end{definition}
We denote by $\text{Der}C^{\infty}(S)$ the space of derivations of $C^{\infty}(S)$.
\begin{definition}
A derivation of $C^{\infty}(S)$ at $x \in S$ is a linear map $v:C^{\infty}(S)\rightarrow \mathbb{R}$ such that
\[v(f_1f_2)=v(f_1)f_2(x)+f_1(x)v(f_2)\]
for every $f_1, f_2\in C^{\infty}(S)$.
\end{definition}
We interpret derivations of $C^{\infty}(S)$ at $x \in S$ as tangent vectors to $S$ at $x$. The
set of all derivations of $C^{\infty}(S)$ at $x$ is denoted by $T_xS$ and is called the tangent
space to $S$ at $x$.
\begin{lemma}\cite{5}\label{lem15}
If $f\in C^{\infty}(S)$ vanishes identically in an open set $U \subseteq S$, then
$X(f)|U=0$ for all $X\in \text{Der}C^{\infty}(S)$.
\end{lemma}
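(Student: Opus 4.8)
The plan is to reduce this global statement to a pointwise one and then exploit the Leibniz rule together with the existence of bump functions. Fix an arbitrary point $x \in U$; since $x$ ranges over all of $U$, it suffices to prove that $X(f)(x) = 0$. The governing idea is to manufacture a function that coincides with $f$ near $x$ yet is globally the zero function, so that applying $X$ and evaluating at $x$ isolates precisely the value $X(f)(x)$.

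First I would invoke the bump function lemma cited from \cite{5} (the unnamed lemma immediately preceding Lemma \ref{lem13}): applied to the open set $U$ and the point $x \in U$, it produces a function $g \in C^{\infty}(S)$ together with a neighbourhood $V$ of $x$ with $V \subseteq U$ and an open set $W$ satisfying $g|V = 1$, $g|W = 0$, and $U \cup W = S$. Note that the product $gf$ lies in $C^{\infty}(S)$ by condition 2 of Definition \ref{def3} (take $F(a,b)=ab$), so it is a legitimate argument for $X$.

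Next I would verify that $gf$ is the zero element of $C^{\infty}(S)$. On $U$ we have $f = 0$ by hypothesis, hence $gf = 0$ there; on $W$ we have $g = 0$, hence $gf = 0$ there as well. Since $U \cup W = S$, the function $gf$ vanishes identically on $S$. Because $X$ is linear, $X(gf) = X(0) = 0$ as a function on $S$ (additivity forces $X(0)=0$). The Leibniz rule then yields the identity $X(g)\,f + g\,X(f) = 0$ on all of $S$. Evaluating at the point $x$ and using $f(x) = 0$ (as $x \in U$) together with $g(x) = 1$ (as $x \in V$), every term drops out except $g(x)\,X(f)(x) = X(f)(x)$, which forces $X(f)(x) = 0$. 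Since $x \in U$ was arbitrary, $X(f)|U = 0$, as claimed.

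The only genuinely delicate ingredient is the construction of the cutoff $g$ with the prescribed vanishing behaviour, and this is exactly what the cited bump function lemma supplies; its role is to guarantee the global vanishing of $gf$ through the covering relation $U \cup W = S$, which is what lets a purely local hypothesis propagate to a global product. Everything else is a mechanical application of linearity and the Leibniz rule, so I expect no further obstruction once the bump function is in hand.
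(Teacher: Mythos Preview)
Your argument is correct and is the standard bump-function-plus-Leibniz proof of this locality property of derivations. Note, however, that the paper does not supply its own proof of this lemma: it is simply cited from \cite{5} with no argument given. So there is nothing in the paper to compare your approach against; your proof stands on its own and would be the expected one in this context.
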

\begin{definition}
The tangent bundle of a differential space $S$ is the union $TS$ of all tangent spaces to $S$ at all points in $S$. In other words,
$TS=\cup_{x\in S}T_xS$. The tangent bundle projection is the map $\tau:TS\rightarrow S$ that assigns to each $v \in TS$ the point $x \in S$
such that $v \in T_xS$.
\end{definition}
\begin{definition}
The differential structure $C^{\infty}(TS)$ of $TS$ is generated by the family of functions $\{f\circ \tau,df|f\in C^{\infty}(S)\}$.
\end{definition}
\begin{definition}
 A smooth section of $\tau:TS\rightarrow S$ is a smooth map $\sigma: S\rightarrow TS$ such that $\tau\circ \sigma=\text{identity}$.
 A local smooth section of $\tau:TS\rightarrow S$ is a smooth map $\gamma: U \rightarrow \tau^{-1}(U)$ such that $\tau\circ \gamma=\text{identity}$.
\end{definition}
\begin{definition}\label{def1}
 A generalized distribution on a subcartesian space $S$ is a subset $D \subseteq TS$ such that for each $x \in S$, the intersection
$D_x=D \cap T_xS$ is a vector subspace of $T_xS$. $D$ is smooth, if for any $v_x \in D_x$, there exists a  neighborhood $U$ of $x$ and a local smooth section $X: U\rightarrow \tau^{-1}(U)$ such that $X(x)=v_x, X(y)\in D_y$, for each $y \in U$.
\end{definition}

Let $\pi:E \rightarrow S$ be a smooth map between subcartesian spaces. The map $\pi$ will be said to have the local product property with respect to $\mathbb{R}^k$, if there is an open covering $\{U_{\alpha}\}$ of $S$ and a family $\{\psi_{\alpha}\}$ of diffeomorphisms
\[\psi_{\alpha}: U_{\alpha}\times \mathbb{R}^k \rightarrow \pi^{-1}(U_{\alpha})\]
such that $\pi\circ \psi_{\alpha}(x,y)=x$, for each $x\in U_{\alpha}, y \in \mathbb{R}^k$. The system $\{(U_{\alpha}, \psi_{\alpha})\}$ will be called a local decomposition of $\pi$.
\begin{definition}
A smooth vector bundle is a four-tuple $(E, \pi, S, \mathbb{R}^k)$ where $\pi: E \rightarrow S$ is a smooth map between subcartesian spaces which has the local  product property with respect to $\mathbb{R}^k$. Besides, for each $x \in S$, $\pi^{-1}(x)$ is a $k$-dimensional real vector space, and $\psi_{\alpha}^{-1}|\pi^{-1}(x): \pi^{-1}(x)\rightarrow \{x\}\times \mathbb{R}^k$ is an linear isomorphism, for each  $x \in U_\alpha$.
A local decomposition for $\pi$ is called a coordinate representation for the vector bundle.
\end{definition}
\begin{definition}
A smooth section of $(E, \pi, S, \mathbb{R}^k)$ is a smooth map $\sigma: S\rightarrow E$ such that $\pi\circ \sigma=\text{identity}$.
 A local smooth section of $(E, \pi, S, \mathbb{R}^k)$ is a smooth map $\gamma: U \rightarrow \pi^{-1}(U)$ such that $\pi\circ \gamma=\text{identity}|U$, where $U$ is an open subset on $S$.
\end{definition}
\begin{definition}
A generalized subbundle of a smooth vector bundle  $(E, \pi, S, \mathbb{R}^k)$ is a subset $F \subseteq E$ such that for each $x \in S$,
$F_x=\pi^{-1}(x)\cap F$ is a subspace of $\pi^{-1}(x)$. $F$ is smooth, if for any $e_x \in F_x$, there exist a  neighborhood $U$ of $x$ and a local smooth section $\sigma: U\rightarrow \pi^{-1}(U)$ such that $\sigma(x)=e_x$, and $\sigma(y) \in F_y$, for each $y \in U$.
\end{definition}
\begin{definition}
A smooth Riemannian metric in $(E, \pi, S, \mathbb{R}^k)$ is a symmetric positive definite bilinear form $g(x)$ in $\pi^{-1}(x)$, for each
$x \in S$, such that for each smooth section $\sigma$ of $(E, \pi, S, \mathbb{R}^k)$, the function $g(x)(\sigma(x),\sigma(x))\in C^{\infty}(S)$.
\end{definition}
\begin{proposition}\label{prop4}
Each smooth vector bundle admits a smooth Riemannian metric.
\end{proposition}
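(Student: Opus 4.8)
The plan is to adapt the classical gluing argument for Riemannian metrics on manifolds to the subcartesian category: produce the metric locally from the trivializations, then patch the local pieces together with a partition of unity.

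First I would fix a coordinate representation $\{(U_\alpha,\psi_\alpha)\}$ for $\pi$ and, on each $\pi^{-1}(U_\alpha)$, pull back the standard inner product $\langle\cdot,\cdot\rangle$ of $\mathbb{R}^k$. Writing $\mathrm{pr}_2:U_\alpha\times\mathbb{R}^k\to\mathbb{R}^k$ for the projection, I set, for $e,e'\in\pi^{-1}(x)$ with $x\in U_\alpha$, $g_\alpha(x)(e,e')=\langle \mathrm{pr}_2\,\psi_\alpha^{-1}(e),\,\mathrm{pr}_2\,\psi_\alpha^{-1}(e')\rangle$. Because $\psi_\alpha^{-1}$ restricts to a linear isomorphism $\pi^{-1}(x)\to\{x\}\times\mathbb{R}^k$ on each fiber, $g_\alpha(x)$ is a symmetric, positive definite bilinear form on $\pi^{-1}(x)$. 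To see it is smooth in the required sense, I would take any local smooth section $\sigma$ over $U_\alpha$; then $\psi_\alpha^{-1}\circ\sigma$ is a smooth map into $U_\alpha\times\mathbb{R}^k$, so its fiber components $s^1,\dots,s^k$ belong to $C^{\infty}(U_\alpha)$, and $g_\alpha(x)(\sigma(x),\sigma(x))=\sum_{j}(s^j(x))^2$ lies in $C^{\infty}(U_\alpha)$ by condition 2 of Definition \ref{def3}.

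Next I would invoke the partition of unity provided by Theorem \ref{thm2}: assuming $S$ is locally compact and second countable (it is Hausdorff by the definition of a subcartesian space), there is a countable, compactly supported partition of unity $\{\rho_i\}$ subordinate to $\{U_\alpha\}$, so that $\mathrm{supp}\,\rho_i\subseteq U_{\alpha(i)}$ for some index $\alpha(i)$. I then define the candidate global metric fiberwise by $g(x)(e,e')=\sum_i \rho_i(x)\,g_{\alpha(i)}(x)(e,e')$, where each summand is understood to be extended by zero off $\mathrm{supp}\,\rho_i$. Local finiteness of the supports makes this a locally finite sum, hence well defined, and symmetry is inherited termwise. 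For positive definiteness, note that at every $x$ we have $\sum_i\rho_i(x)=1$, so some $\rho_{i_0}(x)>0$; the term $\rho_{i_0}(x)\,g_{\alpha(i_0)}(x)$ is then positive definite while every other term is positive semidefinite, and a sum of a positive definite form with positive semidefinite forms is positive definite.

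The remaining point, and the one requiring the most care, is the global smoothness of $g$. For a global smooth section $\sigma$, each function $x\mapsto \rho_i(x)\,g_{\alpha(i)}(x)(\sigma(x),\sigma(x))$ agrees on $U_{\alpha(i)}$ with a product of elements of $C^{\infty}(U_{\alpha(i)})$ and vanishes identically outside $\mathrm{supp}\,\rho_i$; by the locality axiom (condition 3 of Definition \ref{def3}) it therefore belongs to $C^{\infty}(S)$, and the locally finite sum of such functions is again in $C^{\infty}(S)$ by the same axiom, since near each point only finitely many terms are nonzero. The main obstacle is thus not the algebra of positive definiteness but the verification that the subcartesian differential structure behaves well under these operations: one must be sure that the fiberwise components of $\psi_\alpha^{-1}\circ\sigma$ are genuine smooth functions (using the product differential structure together with condition 2) and that the extend-by-zero and locally-finite-sum steps keep us inside $C^{\infty}(S)$ via condition 3.
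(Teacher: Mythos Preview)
Your proposal is correct and follows essentially the same approach as the paper: define local metrics by pulling back the Euclidean inner product via the trivializations $\psi_\alpha$, then glue with a partition of unity from Theorem~\ref{thm2} to obtain $g=\sum_i \rho_i\,g_{\alpha(i)}$. The paper's proof is simply terser and omits the explicit verifications of positive definiteness and smoothness that you spell out using conditions~2 and~3 of Definition~\ref{def3}.
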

\begin{proof}
Consider the trivial bundle $S \times \mathbb{R}^k$. Let $<,>$ be a Euclidean metric in $\mathbb{R}^k$, then
\[g(x)(y_1, y_2)=<y_1, y_2>, x\in S, y_1,y_2 \in \mathbb{R}^k\]
defines a smooth Riemannian metric in $S \times \mathbb{R}^k$.

Now consider any smooth vector bundle $(E, \pi, S, \mathbb{R}^k)$. Let $\{(U_{\alpha}, \psi_{\alpha})\}$ be a local decomposition of $\pi$. It follows from Theorem \ref{thm2} that there exists a subordinate partition of unity $\{p_{\alpha}\}$.

Since the restriction of $(E, \pi, S, \mathbb{R}^k)$ to $U_{\alpha}$ is trivial, there is a Riemannian metric $g_{\alpha}$. Define $g$ by
$\Sigma_{\alpha}p_{\alpha}g_{\alpha}$. Then $g(x)$ is a Euclidean metric in $\pi^{-1}(x)$; hence $g$ is a smooth Riemannian metric in $(E, \pi, S, \mathbb{R}^k)$.
\end{proof}
In the following we restrict our attention to Hausdorff, locally compact, second countable subcartesian spaces. Hence, we shall be able to rely on
the existence of partitions of unity. Note that a subcartesian space is Hausdorff;
see Definition \ref{def2}. Moreover, Definition \ref{def2} and condition 1 of Definition
\ref{def3} imply that a subcartesian space is locally compact. Thus, we make
the following assumption.

{\bfseries{Assumption}}. Throughout the paper, all subcartesian spaces are second countable and connected.

\section{Dimension Theory}
\begin{definition}\cite{9}
A space $A$ has covering dimension at most $m$
if every open covering of $A$ has a refinement of order at most $m$ (which
means that no point of $A$ lies in more than $m+1$ elements of the refinement).
If $A$ is a closed subset of the topological space $M$, this is equivalent to the statement
that every open covering of $M$ has a refinement whose restriction to
$A$ has order at most $m$. If $A'$ is a closed subset of $A$ and $A$ has
covering dimension at most $m$, so does $A'$.
\end{definition}
\begin{lemma}\label{lem1}\cite{8}
Let $M$ be a second countable locally compact Hausdorff topological space. Then there exist countable many open sets $G_1, G_2, \cdots, G_k, \cdots$
satisfying

(1) $cl(G_j)$ is compact, $j=1,2, \cdots$;

(2) $cl(G_j)\subseteq G_{j+1}, j=1,2, \cdots$;

(3) $\cup G_j=\cup cl(G_j)=M,$\\
where $cl(G_j)$ denotes the closure of $G_j,j=1,2,\cdots$.
\end{lemma}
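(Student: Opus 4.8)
The plan is to reduce everything to a countable basis of suitably small open sets and then run an inductive exhaustion argument. First I would produce a countable basis $\{B_i\}_{i=1}^{\infty}$ of $M$ consisting of open sets whose closures $cl(B_i)$ are compact. Starting from some countable basis $\{V_n\}$ guaranteed by second countability, the key ingredient is the standard fact that in a locally compact Hausdorff space, every point $x$ and every open neighbourhood $W$ of $x$ admit an open set $U$ with $x\in U$, $cl(U)\subseteq W$, and $cl(U)$ compact. Choosing a basis element $V_n$ with $x\in V_n\subseteq U$ then gives $cl(V_n)\subseteq cl(U)$, which is a closed subset of a compact set and hence compact. Thus the subfamily of those basis elements having compact closure is again a basis, and being a subfamily of a countable family it is countable; I relabel it as $\{B_i\}$.

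Next I would construct the sets $G_j$ by induction. Set $G_1=B_1$, whose closure is compact. Assuming $G_j$ has been defined as a finite union of the $B_i$, so that $cl(G_j)$ is compact, I use compactness of $cl(G_j)$ to extract from the open cover $\{B_i\}$ a finite subcover; letting $m_j$ be the least index with $cl(G_j)\subseteq B_1\cup\cdots\cup B_{m_j}$, I define
\[
G_{j+1}=B_1\cup\cdots\cup B_{m_j}\cup B_{j+1}.
\]
The extra term $B_{j+1}$ is adjoined precisely so as to force the exhaustion at the end.

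Then I would verify the three properties. Property (1) holds because each $G_j$ is a finite union of sets with compact closure, so $cl(G_j)$ is compact. Property (2) is immediate from the construction, since $cl(G_j)\subseteq B_1\cup\cdots\cup B_{m_j}\subseteq G_{j+1}$. For property (3), observe that $B_i\subseteq G_i$ for every $i$ directly from the definitions, hence $M=\bigcup_i B_i\subseteq\bigcup_i G_i\subseteq M$; and since $G_i\subseteq cl(G_i)\subseteq M$, the same equality holds for the closures, giving $\bigcup_j G_j=\bigcup_j cl(G_j)=M$.

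I expect the main obstacle to be the first step, namely producing the countable precompact basis, because it is where the hypotheses of local compactness and the Hausdorff property are genuinely used (to obtain, for each point and neighbourhood, a smaller neighbourhood with compact closure contained in it). Once that basis is in hand, the inductive construction and the verification are routine; the only point requiring slight care is the bookkeeping that simultaneously guarantees the nesting $cl(G_j)\subseteq G_{j+1}$ and the exhaustion $\bigcup_j G_j=M$, which is handled by always adjoining the new basis element $B_{j+1}$ at stage $j+1$.
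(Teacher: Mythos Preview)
Your argument is correct and is precisely the standard exhaustion-by-compacta construction. Note, however, that the paper does not supply its own proof of this lemma: it is stated with a citation to Hirsch's \emph{Differential Topology} and used as a black box. What you have written is essentially the textbook proof one finds there (or in Lee's \emph{Introduction to Smooth Manifolds}), so there is no meaningful methodological difference to discuss. One small wording quibble: when you take ``the least index $m_j$ with $cl(G_j)\subseteq B_1\cup\cdots\cup B_{m_j}$'', it would be cleaner to first extract a finite subcover $B_{i_1},\dots,B_{i_k}$ and then set $m_j=\max\{i_1,\dots,i_k,j+1\}$, so that the existence of $m_j$ is immediate and the inclusion $B_{j+1}\subseteq G_{j+1}$ comes for free; but your version works as written.
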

\begin{lemma}\label{lem2}
Let $S$ be a subcartesian space and $\Phi \in C^{\infty}(S)$.
Then there exist locally finite open covers  $(U_j)_{j\in \mathbb{Z}_{>0}}, (V_j)_{j \in \mathbb{Z}_{>0}}, (W_j)_{j \in \mathbb{Z}_{>0}}$ such that $cl(U_j)\subseteq V_j, cl(V_j)\subseteq W_j$, and $cl(W_j)$ is compact, for
each $j>0$, where $(W_j,\mathbb{R}^{n_j}, \phi_j)$ is a local chart of $S$, such that there exists a smooth extension $f_j$ of $\Phi$ on $W_j$, that is, $f_j\circ \phi_j|W_j=\Phi|W_j$.
\end{lemma}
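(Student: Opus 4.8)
The plan is to manufacture the three covers point by point: around each point of $S$ I first produce a single nested triple of chart neighbourhoods carrying a smooth extension of $\Phi$, and then assemble these triples into countable, locally finite covers by controlling where each chart is allowed to live. The two ingredients that make this work are already available. The subcartesian structure together with the existence-of-extension remark gives, at every $x\in S$, a local chart on whose domain $\Phi$ extends smoothly; and Proposition \ref{prop1} lets me shrink any open neighbourhood of a point to a strictly smaller one whose closure lies inside it, which is precisely what produces the nesting $cl(U_j)\subseteq V_j$ and $cl(V_j)\subseteq W_j$. Compact closures will come for free from the exhaustion supplied by Lemma \ref{lem1}.

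First I would fix, for each $x\in S$, a local chart $(\Omega_x,\mathbb{R}^{n_x},\phi_x)$ with $x\in\Omega_x$ and a smooth extension $g_x$ satisfying $g_x\circ\phi_x|\Omega_x=\Phi|\Omega_x$. Next I would invoke Lemma \ref{lem1} to obtain an exhaustion $G_1\subseteq cl(G_1)\subseteq G_2\subseteq\cdots$ by open sets with compact closures and $\bigcup_j G_j=S$. Putting $G_0=G_{-1}=\emptyset$, the shells $A_j=cl(G_j)\setminus G_{j-1}$ are compact, satisfy $\bigcup_j A_j=S$, and each $A_j$ sits inside the open ``annulus'' $B_j=G_{j+1}\setminus cl(G_{j-2})$ (here one uses $cl(G_{j-2})\subseteq G_{j-1}$ and $cl(G_j)\subseteq G_{j+1}$).

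For a point $x\in A_j$ I would intersect $\Omega_x$ with $B_j$ and apply Proposition \ref{prop1} three times to obtain open sets $x\in U\subseteq cl(U)\subseteq V\subseteq cl(V)\subseteq W\subseteq\Omega_x\cap B_j$, where $cl(W)$ is compact since it is a closed subset of the compact set $cl(G_{j+1})$. Because $W$ is open in the chart domain $\Omega_x$, the triple $(W,\mathbb{R}^{n_x},\phi_x|W)$ is again a local chart and $g_x$ restricts to a smooth extension of $\Phi$ on $W$; hence the triple $(U,V,W)$ already has all the pointwise properties demanded in the statement. Since $A_j$ is compact, finitely many of the innermost sets $U$ cover it, and I keep the corresponding finitely many triples. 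Ranging over $j\in\mathbb{Z}_{>0}$ and re-indexing yields countable families $(U_i,V_i,W_i)$ with $cl(U_i)\subseteq V_i$, $cl(V_i)\subseteq W_i$, each $cl(W_i)$ compact, and each $W_i$ a chart domain carrying a smooth extension $f_i$ of $\Phi$.

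It remains to verify that the three families are locally finite open covers. The innermost sets $U_i$ cover $S$ because $\bigcup_j A_j=S$ and the chosen $U$'s cover each $A_j$; hence so do the larger $V_i$ and $W_i$. The one genuine obstacle is local finiteness, and it is exactly what confining $W_i\subseteq B_j$ buys: any point lies in some open $G_m$, and $G_m\cap B_j=\emptyset$ once $j\geq m+3$ (since then $G_m\subseteq cl(G_m)\subseteq G_{m+1}\subseteq G_{j-2}\subseteq cl(G_{j-2})$), so the neighbourhood $G_m$ meets only the finitely many $W_i$ arising from shells $A_j$ with $j\leq m+2$. As $V_i\subseteq W_i$ and $U_i\subseteq W_i$, the families $(V_i)$ and $(U_i)$ are locally finite as well, which finishes the construction. (Alternatively, once $\{W_i\}$ is produced the double shrinking could be obtained from the shrinking lemma, since a locally compact, Hausdorff, second countable space is metrizable and hence normal; I prefer the explicit route above as it relies only on Proposition \ref{prop1} and Lemma \ref{lem1}.)
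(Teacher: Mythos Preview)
Your proposal is correct and follows essentially the same route as the paper: both use the exhaustion from Lemma~\ref{lem1} to form compact shells $cl(G_j)\setminus G_{j-1}$ inside open annuli $G_{j+1}\setminus cl(G_{j-2})$, build nested chart neighbourhoods via Proposition~\ref{prop1}, extract finitely many per shell by compactness, and deduce local finiteness from the annulus containment. Your treatment is in fact a bit cleaner---you obtain compactness of $cl(W)$ directly as a closed subset of the compact $cl(G_{j+1})$, whereas the paper detours through an auxiliary compact set from local compactness---but the architecture is identical.
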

\begin{proof}
From Lemma \ref{lem1} we know that there exist countable open sets $G_1, \cdots, G_k,\cdots$ satisfying conditions (1), (2) and (3) in Lemma \ref{lem1}.
It follows that $cl(G_h)/{G_{h-1}}$ is compact,  $G_{h+1}/{cl (G_{h-2})}$ is open and $cl(G_h)/{G_{h-1}}\subseteq G_{h+1}/{cl (G_{h-2})}$. Since $S$ is locally compact, for any $p\in cl({G}_h)/{G_{h-1}}$, there exists a compact set $C$ in $S$ containing an open neighborhood $U$ of $p$. On the other hand we know the  local charts of $S$ form an open cover of $S$. Then for $p\in cl({G}_h)/{G_{h-1}}$, there exist a local chart $(\mathscr{V}, \phi)$ and an open set $W\subset \mathscr{V}$ such that

(1) There exists a smooth function $f$ on $\mathbb{R}^n$ satisfying $f\circ \phi|W=\Phi|W$;

(2) $p\in W\subseteq (G_{h+1}/{cl({G}_{h-2})})\cap \mathscr{V}$;

(3) $\phi(p)=0$ and $\phi(W)=\phi(\mathscr{W})\subseteq \phi(\mathscr{V})$; \\
Here $\mathscr{W}$ is an open subset containing $p$ such that $cl(\mathscr{W}) \subseteq (G_{h+1}/{cl({G}_{h-2})})\cap \mathscr{V}\cap U$. The existence of $\mathscr{W}$ is ensured by Proposition \ref{prop1}. Since $cl(W) \subseteq U\subseteq C$ and $C$ is compact, it follows that
$cl(W)$ is compact.

Let $\mathscr{W}_1$ be an open set containing $p$ such that $cl(\mathscr{W}_1)\subseteq W$.
Denote by $V=\mathscr{W}_1$.   And  let $\mathscr{W}_2$ be an open set containing $p$ such that $cl(\mathscr{W}_2)\subseteq V$. Denote by $U=\mathscr{W}_2$.  Then we have $cl(U) \subseteq V$ and $cl(V)\subseteq W$.

Since  ${cl({G}_h})/{G_{h-1}}$ is compact, there exist finitely many points $p_{h, 1}, p_{h, 2}, \cdots$, $p_{h, k_h} \in cl({G}_h)/{G_{h-1}}$, such that the corresponding open sets $U_{h,1}, U_{h,2}, \cdots, U_{h,k_h}$  form an open cover of $cl(G_h)/G_{h-1}$.
We claim that the corresponding open sets
\[(U_{1,1}, U_{1,2}, \cdots, U_{1,k_1}; U_{2,1}, U_{2,2}, \cdots, U_{2, k_2}; \cdots),\]
\[(V_{1,1}, V_{1,2}, \cdots, V_{1,k_1}; V_{2,1}, V_{2,2}, \cdots, V_{2, k_2}; \cdots),\]
\[(W_{1,1}, W_{1,2}, \cdots, W_{1,k_1}; W_{2,1}, W_{2,2}, \cdots, W_{2, k_2}; \cdots),\]
satisfy the conditions in the lemma.   We only need to prove the local finiteness of $(W_{i,j})$. Given $p\in S$, assume that
$p \in G_r$, then it follows from the above construction that there exist finite many $W_{i,j}$ that intersect $G_r$. In fact,
\[W_{i,j}\cap G_r=\emptyset, i\geq r+2, 1\leq j \leq k_i.\]
This completes the proof of the lemma.
\end{proof}

\begin{lemma}\cite{9}\label{lem9}
Let $A_n$ be a closed subset of $M$, such that $A_n\subseteq
\text{Int}A_{n+1}$ and $\cup A_n=M$. If $A_l$ and each set $cl(A_{n+1} - A_n)$ have covering
dimension at most $m$, so does $M$.
\end{lemma}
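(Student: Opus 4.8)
The plan is to reduce the statement to a sum theorem for the covering dimension by exhibiting $M$ as a union of countably many closed subsets, each of covering dimension at most $m$. The hypotheses are tailor-made for this: the set $A_1$ and the successive ``shells'' $cl(A_{n+1}-A_n)$ are closed, their dimensions are bounded by assumption, and together they exhaust $M$.

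First I would verify the decomposition $M = A_1 \cup \bigcup_{n\ge 1} cl(A_{n+1}-A_n)$. Given $x\in M$, pick the least $N$ with $x\in A_N$ (possible since $\bigcup_n A_n=M$ and the $A_n$ increase); if $N=1$ then $x\in A_1$, while if $N>1$ then $x\in A_N\setminus A_{N-1}\subseteq cl(A_N - A_{N-1})$. Each term is closed, and by hypothesis has covering dimension at most $m$, so $M$ is a countable union of closed sets of covering dimension $\le m$.

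To make full use of the hypothesis $A_n\subseteq \text{Int}A_{n+1}$ --- and to obtain a cover for which the sum theorem is most transparent --- I would instead pass to the closed sets $C_n = A_n\setminus \text{Int}A_{n-1}$ (with $A_0=\emptyset$). Using $A_{n-1}\subseteq \text{Int}A_n$ one checks that a point of $A_{n-1}\setminus\text{Int}A_{n-1}$ is a limit of points of $A_n\setminus A_{n-1}$, so $C_n\subseteq cl(A_n - A_{n-1})$; hence $C_n$ is a closed subset of a set of covering dimension $\le m$ and therefore has covering dimension $\le m$ by the closed-subspace monotonicity recorded in the definition. The same interior condition makes $\{C_n\}$ locally finite, since any $x$ lies in $\text{Int}A_N$ for some $N$, and $\text{Int}A_N$ is disjoint from every $C_n$ with $n>N$.

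Finally I would invoke the sum theorem for covering dimension: if a normal space is the union of a countable (or locally finite) family of closed subspaces each of covering dimension at most $m$, then the whole space has covering dimension at most $m$. Under the standing assumptions $M$ is second countable, locally compact and Hausdorff, hence metrizable and in particular normal, so the theorem applies and yields $\dim M\le m$. The hard part is not the decomposition, which is essentially bookkeeping, but the sum theorem itself: this is the substantive result of dimension theory on which the argument rests, and the only real care needed is to confirm that the order-of-refinement definition of covering dimension used here (together with its closed-subspace monotonicity) is exactly the setting in which the sum theorem is available, so that it may be cited from the dimension-theory literature \cite{9}.
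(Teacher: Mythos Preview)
The paper does not supply its own proof of this lemma; it is quoted verbatim from Munkres \cite{9} and used as a black box. There is therefore nothing in the paper to compare your argument against beyond the citation itself.

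Your reduction to the countable (locally finite) closed sum theorem for covering dimension is correct and is the standard route to this result. The decomposition into the closed shells $C_n = A_n \setminus \text{Int}\,A_{n-1}$, the verification that $C_n \subseteq cl(A_n - A_{n-1})$ (and hence has dimension at most $m$ by closed-subspace monotonicity), the local finiteness via $\text{Int}\,A_N$, and the appeal to normality of $M$ all go through as you describe. The only point worth flagging is that you are right to identify the sum theorem as the substantive input: since the paper takes the lemma from \cite{9} wholesale, your proof is in effect unpacking that citation rather than replacing it with something more elementary.
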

\begin{theorem}
Every subcartesian space $S$ has covering dimension at most $n$, where $n$ is the structural dimension of $S$.
\end{theorem}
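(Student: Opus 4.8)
The plan is to combine the local Euclidean structure of $S$ with the exhaustion of Lemma~\ref{lem1} and the sum theorems of dimension theory. The basic Euclidean input I would record first is that every subset $V \subseteq \mathbb{R}^k$ has covering dimension at most $k$; this is the standard monotonicity of covering dimension for subspaces of separable metric spaces together with $\dim \mathbb{R}^k = k$. Since $S$ is subcartesian, each $x \in S$ has an open neighbourhood diffeomorphic, hence homeomorphic, to a subset of $\mathbb{R}^{n_x}$ with $n_x \le n$. Consequently every point of $S$ has an open neighbourhood of covering dimension at most $n$, and the chart domains form an open cover of $S$.

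Next I would treat compact subsets. Let $K \subseteq S$ be compact; since $S$ is Hausdorff, $K$ is closed in $S$. Cover $K$ by finitely many chart domains $W_1, \dots, W_r$ and, using normality of the compact Hausdorff space $K$ (the shrinking lemma), choose closed sets $F_1, \dots, F_r$ with $F_i \subseteq W_i \cap K$ and $\bigcup_{i=1}^r F_i = K$. Each $F_i$ is closed in $S$ and contained in $W_i$, hence homeomorphic to a subset of $\mathbb{R}^{n_i}$, so $\dim F_i \le n_i \le n$ by the first step and the closed-subset monotonicity recorded in the definition of covering dimension. By the finite sum theorem for covering dimension in normal spaces, $\dim K \le n$.

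Finally I would globalize. By Lemma~\ref{lem1} choose open sets $G_1, G_2, \dots$ with $cl(G_j)$ compact, $cl(G_j) \subseteq G_{j+1}$, and $\bigcup_j G_j = S$, and set $A_j = cl(G_j)$. Then each $A_j$ is closed, $A_j \subseteq G_{j+1} \subseteq \mathrm{Int}\, A_{j+1}$, and $\bigcup_j A_j = S$. Both $A_1$ and each shell $cl(A_{j+1} - A_j)$ are closed subsets of the compact sets $A_1$ and $A_{j+1}$ respectively, hence compact, so by the previous step each has covering dimension at most $n$. Lemma~\ref{lem9} then yields $\dim S \le n$, which is the assertion. (If preferred, the locally finite chart covers of Lemma~\ref{lem2}, applied with $\Phi$ constant, provide the same chart cover together with the nesting $cl(U_j) \subseteq V_j \subseteq cl(V_j) \subseteq W_j$ used in the compact step.)

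The main obstacle is precisely the passage from the local Euclidean bound to a global one: covering dimension is not additive over arbitrary unions, so the chart-by-chart estimate cannot be naively glued, and this is exactly where the sum theorems intervene — the finite sum theorem to handle each compact shell covered by several charts, and Lemma~\ref{lem9} to assemble the countable exhaustion. The delicate bookkeeping that makes these theorems applicable is verifying that the shells $cl(A_{j+1}-A_j)$ are genuinely compact and that the shrinking of a finite chart cover produces closed sets each still contained in a single chart, so that the Euclidean dimension bound transfers to them.
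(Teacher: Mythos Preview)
Your proof is correct and follows essentially the same three-step strategy as the paper: the local Euclidean bound on dimension, the finite sum theorem for compact pieces, and Lemma~\ref{lem9} to assemble the exhaustion. The only cosmetic difference is that the paper builds its exhaustion $A_l$ directly as finite unions of the sets $cl(V_j)$ from Lemma~\ref{lem2} (each of which is already compact and contained in a single chart, so its dimension bound is immediate), whereas you take the exhaustion $A_j = cl(G_j)$ from Lemma~\ref{lem1} and prove a separate sublemma that \emph{any} compact subset has dimension at most $n$ via a shrinking argument; you even note this alternative in your final parenthetical.
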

\begin{proof}
Cover $S$ with the sets $\{cl(V_j)\}_{j \in \mathbb{Z}_{>0}}$ as given in Lemma \ref{lem2}. Let $A_1=cl(V_1)$.  In general, given
$A_l=cl(V_1)\cup\cdots\cup cl(V_q)$, since $cl(V_1)\cup\cdots\cup cl(V_q)$ is compact, we can choose an integer $p>q$ such that $A_l\subseteq \text{Int}(cl(V_1)\cup\cdots\cup cl(V_p))$, and let $A_{l+1}=
cl(V_1)\cup\cdots\cup cl(V_p)$.  Since $\phi_j(cl(V_j))$ is compact in $\mathbb{R}^{n_j}$, it follows from Theorem 2.15 in \cite{9} that $\phi_j(cl(V_j))$ has covering dimension at most $n_j$, which yields that $cl(V_j)$ has covering dimension at most $n_j$ since $\phi_j$ is a diffeomorphism. Hence according to Corollary 2.14 in \cite{9}, each $A_l$ has covering dimension at most $n$. Then the result follows immediately from Lemma \ref{lem9}.
\end{proof}

\begin{lemma}\label{lem10}
Let $S$ be a subcartesian space with structural dimension $n$. Let $\mathcal{O}$ be an open covering of $S$. There is a locally finite open refinement of $\mathcal{O}$  which is the union
of $n+1$ collections $\mathcal{O}_0, \cdots, \mathcal{O}_m$ such that the sets belonging to any one $\mathcal{O}_i$
are pairwise disjoint.
\end{lemma}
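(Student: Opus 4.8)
The plan is to build the refinement by an Ostrand-style ``coloring'' of a partition of unity, in which the number of colors is controlled by the covering dimension. The theorem just proved tells us that $S$ has covering dimension at most $n$, so the given cover $\mathcal{O}$ admits an open refinement $\mathcal{W}=\{W_\alpha\}$ of order at most $n$; that is, no point lies in more than $n+1$ of the $W_\alpha$. The difficulty is that this $\mathcal{W}$ need not be locally finite, whereas the argument below needs a single cover that is \emph{simultaneously} locally finite and of order at most $n$. To reconcile these I would first apply Lemma \ref{lem13} to $\mathcal{W}$ to obtain a countable locally finite open refinement $\{U_i\}_{i\in\mathbb{Z}_{>0}}$, choose for each $i$ an index $\alpha(i)$ with $U_i\subseteq W_{\alpha(i)}$, and then amalgamate by setting $W'_\alpha=\bigcup_{\alpha(i)=\alpha}U_i$. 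Since $W'_\alpha\subseteq W_\alpha$, the order of $\{W'_\alpha\}$ is still at most $n$; since each $W'_\alpha$ is a union of members of the locally finite family $\{U_i\}$ and the fibers of the map $i\mapsto\alpha(i)$ partition the index set, the collection $\{W'_\alpha\}$ is again locally finite; and it is clearly still a refinement of $\mathcal{O}$ covering $S$. Reindexing the nonempty ones gives a countable, locally finite open cover $\{W_i\}_{i\in I}$ of order at most $n$ refining $\mathcal{O}$.

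Next I would pass to an index-preserving partition of unity. Theorem \ref{thm2} furnishes a countable partition of unity subordinate to $\{W_i\}$; assigning to each member an index $i$ whose set $W_i$ contains its support and summing the members sharing a common index yields $\{\phi_i\}_{i\in I}$ with $\operatorname{supp}\phi_i\subseteq W_i$ (the sum being smooth since the supports are locally finite). Crucially, $\phi_i(x)>0$ forces $x\in W_i$, so at each point at most $n+1$ of the $\phi_i$ are positive. For each finite nonempty $A\subseteq I$ I would define $V(A)=\{x\in S:\min_{i\in A}\phi_i(x)>\max_{i\notin A}\phi_i(x)\}$. Local finiteness makes the inner maximum a locally finite maximum of smooth functions, so each $V(A)$ is open; if $x\in V(A)$ then every $\phi_i$ with $i\in A$ is positive at $x$, whence $|A|\le n+1$ and also $V(A)\subseteq W_i$ for any $i\in A$, so the family refines $\mathcal{O}$. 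Taking $A_x=\{i:\phi_i(x)>0\}$ shows $x\in V(A_x)$, so the sets $V(A)$ cover $S$; and a neighborhood of a point meeting only finitely many of the supports meets only those $V(A)$ with $A$ contained in the corresponding finite index set, which gives local finiteness.

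The decisive point is that two distinct sets $A,A'$ of the same cardinality yield disjoint $V(A)$ and $V(A')$: choosing $a\in A\setminus A'$ and $b\in A'\setminus A$, membership in $V(A)$ forces $\phi_a(x)>\phi_b(x)$, while membership in $V(A')$ forces $\phi_b(x)>\phi_a(x)$, a contradiction. Hence setting $\mathcal{O}_k=\{V(A):|A|=k+1\}$ for $k=0,\dots,n$ produces $n+1$ collections, each consisting of pairwise disjoint sets, whose union is the desired locally finite open refinement. I expect the main obstacle to be the preliminary step of securing a single cover that is both locally finite and of order at most $n$: the dimension theorem and the paracompactness-type Lemma \ref{lem13} each deliver only one of these two properties, and the amalgamation trick above is precisely what fuses them. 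Once that cover and its index-preserving partition of unity are in hand, the coloring estimates are routine.
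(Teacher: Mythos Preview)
Your proof is correct and is essentially the paper's approach made explicit: the paper takes a countable locally finite refinement (Lemma~\ref{lem13}), a subordinate partition of unity (Theorem~\ref{thm2}), and then defers to ``the same arguments as given in the proof of Lemma~2.7'' in Munkres' \emph{Elementary Differential Topology}, which is precisely the Ostrand-style $V(A)$ coloring you carry out in full. The amalgamation step you flag---fusing the order-$\le n$ refinement coming from the covering-dimension theorem with the local finiteness from Lemma~\ref{lem13}---is exactly the detail the paper leaves inside its citation of Munkres.
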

\begin{proof}
According to Lemma \ref{lem13}, let $(U_j)_{j \in \mathbb{Z}>0}$ be a countable locally finite open refinement of $\mathcal{O}$. It follows from Theorem \ref{thm2} that there exists a partition of unity $(\phi_j){_{j \in \mathbb{Z}>0}}$ such that
the support of $\phi_j$ is contained in $U_j$, for each $j$. Then by using the same arguments as given in the proof of Lemma 2.7 in \cite{9}, the result follows.
\end{proof}

\begin{proposition}
A subcartesian space $S$ admits a finite atlas.
\end{proposition}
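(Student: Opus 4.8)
The plan is to realize the desired finite atlas with at most $n+1$ charts, where $n$ is the structural dimension of $S$, by fusing the pairwise disjoint open sets produced by Lemma \ref{lem10} into single charts. First I would take $\mathcal{O}$ to be the covering of $S$ by domains of local charts, which exists by Definition \ref{def2}; since every local chart into $\mathbb{R}^{n_x}$ with $n_x\le n$ may be regarded, via the standard inclusion $\mathbb{R}^{n_x}\hookrightarrow\mathbb{R}^n$, as a chart into $\mathbb{R}^n$, I may assume all charts take values in $\mathbb{R}^n$. Applying Lemma \ref{lem10} to $\mathcal{O}$ yields a locally finite open refinement that splits as $\mathcal{O}_0\cup\cdots\cup\mathcal{O}_n$ into $n+1$ collections, where each $\mathcal{O}_i$ is a family of pairwise disjoint open sets, each contained in a chart domain and hence itself diffeomorphic to a subset of $\mathbb{R}^n$. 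Because $S$ is second countable, a locally finite family of nonempty open sets is countable, so each $\mathcal{O}_i=\{V_\lambda\}_\lambda$ is countable. The whole problem then reduces to the following claim: the union of the members of a single $\mathcal{O}_i$ is again a chart, i.e.\ is diffeomorphic to a subset of $\mathbb{R}^n$.

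To prove this claim, fix $i$ and write the members as pairwise disjoint open sets $V_\lambda$ with diffeomorphisms $\phi_\lambda\colon V_\lambda\to W_\lambda\subseteq\mathbb{R}^n$. I would first compose each $\phi_\lambda$ with a fixed diffeomorphism $\psi$ of $\mathbb{R}^n$ onto the open ball $B(0,\tfrac14)$, shrinking every image into a small ball; then choose centers $c_\lambda\in\mathbb{R}^n$ pairwise at distance at least $1$ (possible since the family is countable) and set $\Phi|V_\lambda=c_\lambda+\psi\circ\phi_\lambda$, so that $\Phi(V_\lambda)\subseteq B(c_\lambda,\tfrac14)$ and these balls are uniformly separated. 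As the $V_\lambda$ are open and disjoint, $\Phi$ is a well-defined injection of $\bigcup_\lambda V_\lambda$ into $\mathbb{R}^n$.

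It remains to check that $\Phi$ is a diffeomorphism onto its image $T=\bigcup_\lambda\Phi(V_\lambda)$, and this is the step I expect to be the main obstacle, since it is where the differential-space axioms must be used with care. For smoothness of $\Phi$: for any $g\in C^\infty(\mathbb{R}^n)$ the pullback $\Phi^*g$ restricts on each $V_\lambda$ to a smooth function (it is $g$ composed with $\phi_\lambda$ and affine maps), and since the $V_\lambda$ form an open cover of $\bigcup_\lambda V_\lambda$, condition 3 of Definition \ref{def3} gives $\Phi^*g\in C^\infty(\bigcup_\lambda V_\lambda)$. For smoothness of $\Phi^{-1}$ the uniform separation is essential: since every point of $B(c_\mu,\tfrac14)$ lies at distance at least $\tfrac34$ from $c_\lambda$ for $\lambda\neq\mu$, one has $T\cap B(c_\lambda,\tfrac13)=\Phi(V_\lambda)$, so each $\Phi(V_\lambda)$ is open in $T$ and the pieces $\{\Phi(V_\lambda)\}$ form an open cover of $T$. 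On each such piece $\Phi^{-1}$ is $\phi_\lambda^{-1}$ composed with the inverse of the fixed diffeomorphism and a translation, so for $f\in C^\infty(\bigcup_\lambda V_\lambda)$ the pullback $f\circ\Phi^{-1}$ is smooth on $\Phi(V_\lambda)$; applying condition 3 of Definition \ref{def3} on $T$ then shows $\Phi^{-1}$ is smooth. Thus $\bigcup_\lambda V_\lambda$ is a single chart, which proves the claim.

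Finally, applying the claim to each of $\mathcal{O}_0,\dots,\mathcal{O}_n$ produces at most $n+1$ charts whose domains cover $S$, which is the desired finite atlas. I should remark that connectedness plays no role here; the essential ingredients are second countability — which underlies both Lemma \ref{lem10} and the countability of each $\mathcal{O}_i$ — together with the locality axiom, which is what permits the disjoint charts to be glued into one.
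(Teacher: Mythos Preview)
Your proof is correct and follows essentially the same strategy as the paper's: both apply Lemma~\ref{lem10} to decompose a covering by chart domains into $n+1$ families of pairwise disjoint open sets, then fuse each family into a single chart by arranging the chart images in $\mathbb{R}^n$ to be pairwise disjoint. Your verification that the fused map is a diffeomorphism via the locality axiom (condition~3 of Definition~\ref{def3}) is more explicit than the paper's, which simply asserts this step; the only cosmetic difference is that the paper separates images by precomposing with maps $\psi_j:\mathbb{R}^{n_j}\to\mathbb{R}^n$ chosen so that the $\psi_j\circ\phi_j(U_j)$ are disjoint, whereas you shrink each image into a small ball and translate.
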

\begin{proof}
Cover $S$ with the atlas $\{(U_j, \phi_j)_{j\in \mathbb{Z}_{>0}}\}$  as given in Lemma \ref{lem2}. Then we can choose the mappings $\psi_j:\mathbb{R}^{n_j}\rightarrow \mathbb{R}^n, j\in \mathbb{Z}_{>0}$ such that the sets $\psi_j\circ\phi_j(U_j)$ are disjoint.
Let $\{V_{ij}|i \leq n+1, j \in \mathbb{N}\}$ be the refinement of Lemma \ref{lem10}. Choose $\alpha(i,j)$ such that $V_{ij} \subseteq U_{\alpha(i,j)}$. Now let $V_i=\cup_{j \in \mathbb{N}}V_{ij}$ and define $\upsilon_i: V_i \rightarrow \mathbb{R}^n$ by
$\upsilon_i(x)=\phi_{\alpha(i,j)}(x)$, for $x \in V_{ij}$. Since $V_{ij}$ are disjoint for fixed $i$, and the sets $\psi_j\circ\phi_j(U_j)$ are disjoint, it follows immediately that $\upsilon_i$ is a diffeomorphism between $V_i$ and  a subset of $\mathbb{R}^n$, for $i=1, \cdots, n+1$. Hence $\{(V_i, \upsilon_i),i=1, \cdots, n+1\}$ is a finite atlas for $S$.
\end{proof}

\begin{proposition}\label{prop2}
Every smooth vector bundle $(E,\pi, S, \mathbb{R}^k)$ has a finite coordinate representation.
\end{proposition}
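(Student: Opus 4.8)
The plan is to follow exactly the template of the preceding proposition on the existence of a finite atlas, transporting the argument from charts to local trivializations. I begin with the given local decomposition $\{(U_\alpha, \psi_\alpha)\}$ of $\pi$, whose base sets $\{U_\alpha\}$ form an open cover of $S$. Applying Lemma \ref{lem10} to this cover, I obtain a locally finite open refinement that splits as a union of $n+1$ collections $\mathcal{O}_0, \cdots, \mathcal{O}_n$, where $n$ is the structural dimension of $S$ and the members of each single collection $\mathcal{O}_i = \{V_{ij}\}_{j \in \mathbb{N}}$ are pairwise disjoint. For each $V_{ij}$ I choose an index $\alpha(i,j)$ with $V_{ij} \subseteq U_{\alpha(i,j)}$, so that the restriction $\psi_{\alpha(i,j)}|(V_{ij} \times \mathbb{R}^k)$ is a local trivialization over $V_{ij}$.

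The heart of the construction is the gluing step. For each fixed $i$, set $V_i = \cup_{j} V_{ij}$; because the sets $V_{ij}$ are pairwise disjoint and $\pi$ is a well-defined map, the preimages $\pi^{-1}(V_{ij})$ are also pairwise disjoint open subsets of $E$, and $\pi^{-1}(V_i)$ is their disjoint union. I then define $\Psi_i : V_i \times \mathbb{R}^k \rightarrow \pi^{-1}(V_i)$ by setting $\Psi_i(x,y) = \psi_{\alpha(i,j)}(x,y)$ whenever $x \in V_{ij}$. Since the pieces $V_{ij}$ are disjoint and open, this prescription is unambiguous, and I would verify that $\Psi_i$ is well-defined, that $\pi \circ \Psi_i(x,y) = x$ for each $x \in V_i, y \in \mathbb{R}^k$, and that $\Psi_i^{-1}|\pi^{-1}(x)$ restricts to a linear isomorphism $\pi^{-1}(x) \rightarrow \{x\} \times \mathbb{R}^k$ on each fiber. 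All of these are inherited directly from the corresponding properties of the maps $\psi_{\alpha(i,j)}$, which hold because each of the latter belongs to the original local decomposition.

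The step I expect to require the most care is establishing that each $\Psi_i$ is a genuine diffeomorphism of differential spaces together with its inverse. Here I would appeal to the locality axiom, condition 3 of Definition \ref{def3}: on each disjoint open piece $V_{ij} \times \mathbb{R}^k$ the map $\Psi_i$ agrees with the diffeomorphism $\psi_{\alpha(i,j)}$, so the pullback of every smooth function is locally smooth and hence globally smooth, and the same argument applied to the disjoint pieces $\pi^{-1}(V_{ij})$ shows that $\Psi_i^{-1}$ is smooth. Because the covering is locally finite and the base sets within each $\mathcal{O}_i$ are pairwise disjoint, no compatibility conditions on overlaps arise, which is precisely why the refinement of Lemma \ref{lem10} is the right tool and why no analogue of the embedding maps $\psi_j$ from the finite-atlas proof is needed for the fibre factor $\mathbb{R}^k$. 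This yields the finite local decomposition $\{(V_i, \Psi_i) : i = 0, \cdots, n\}$, which is the desired finite coordinate representation.
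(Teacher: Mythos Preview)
Your proposal is correct and follows essentially the same approach as the paper: start from a given coordinate representation, apply Lemma~\ref{lem10} to obtain a locally finite refinement $\{V_{ij}\}$ with the $V_{ij}$ pairwise disjoint for fixed $i$, set $V_i=\cup_j V_{ij}$, and glue the restricted trivializations $\psi_{\alpha(i,j)}$ into a single $\Psi_i$ over $V_i$. The paper simply asserts that the resulting $\Psi_i$ is a diffeomorphism, whereas you spell out the verification via the locality axiom and correctly observe that no disjoint-image trick is needed for the fibre factor; in that respect your argument is more detailed but not different in substance.
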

\begin{proof}
Let $\{(U_\alpha, \psi_\alpha)\}$ be any coordinate representation for $(E, \pi, S, \mathbb{R}^k)$. It follows from Lemma \ref{lem10} that there exists a refinement $\{V_{ij}|i=1, \cdots, m, j \in \mathbb{N}\}$ of $\{U_\alpha\}$ such that $V_{ij}\cap V_{ik}=\emptyset$ for $j \neq k$. Let
$V_i=\cup_jV_{ij}$ and define $\psi_i: V_i\times \mathbb{R}^k\rightarrow \pi^{-1}(V_i)$ by $\psi_i(x,y)=\psi_{ij}(x,y)$ if $x \in V_{ij}, y \in \mathbb{R}^k$, where $\psi_{ij}$ is the restriction of some $\psi_\alpha$. It's easy to see that $\psi_i$ is a diffeomorphism. Hence $\{(V_i, \psi_i)|i=1, \cdots, m\}$ is a coordinate representation for the vector bundle.
\end{proof}
\section{Embedding theorem}

\begin{lemma}\label{lem4}\cite{8}
If $\Phi:U\rightarrow \mathbb{R}^m$ is a smooth mapping from an open subset $U \subseteq \mathbb{R}^n$ with $m\geq 2n$,
then for $\epsilon \in \mathbb{R}_{>0}$, there exists $A\in \mathbb{R}^{m\times n}$ such that
(i) $|A_j^i|<\epsilon$ for $i \in \{1, \cdots, m\}, j\in \{1, \cdots, n\}$. (ii) the mapping $x \rightarrow \Phi(x)+Ax$ is an immersion on $U$.
\end{lemma}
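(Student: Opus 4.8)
The plan is to recognize this as a standard Sard-type perturbation result and to reduce the immersion condition to a rank condition on Jacobians. Writing $\Psi_A(x)=\Phi(x)+Ax$, we have $D\Psi_A(x)=D\Phi(x)+A$, so $\Psi_A$ is an immersion on $U$ precisely when the matrix $D\Phi(x)+A$ has rank $n$ for every $x\in U$. Thus I want to produce an $A$ with entries bounded by $\epsilon$ that avoids, simultaneously for all $x$, the ``bad'' set of $m\times n$ matrices of rank strictly less than $n$.

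The key geometric input is the stratification of the bad set. For each $r\in\{0,\dots,n-1\}$, let $M_r\subseteq\mathbb{R}^{m\times n}$ denote the set of matrices of rank exactly $r$; this is a smooth submanifold of dimension $r(m+n-r)$ (codimension $(m-r)(n-r)$), so the largest stratum is $M_{n-1}$, of dimension $(n-1)(m+1)$. I then form the evaluation maps $H_r:U\times M_r\to\mathbb{R}^{m\times n}$, $(x,M)\mapsto M-D\Phi(x)$, whose images consist exactly of those $A$ for which $D\Phi(x)+A$ lands in $M_r$ for some $x$. A matrix $A$ yields a non-immersion if and only if $A$ lies in $\bigcup_{r=0}^{n-1}\operatorname{im}(H_r)$.

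The decisive step is the dimension count. The domain $U\times M_r$ has dimension $n+r(m+n-r)$, which for the top stratum $r=n-1$ equals $mn+2n-m-1$. The hypothesis $m\geq 2n$ is exactly what forces this to be strictly less than $mn=\dim\mathbb{R}^{m\times n}$; smaller values of $r$ only decrease the dimension further. Since a $C^1$ map from a manifold of dimension strictly less than $mn$ has image of Lebesgue measure zero (covering the open set $U$ by countably many relatively compact pieces to handle non-compactness), each $\operatorname{im}(H_r)$ is null, and hence so is their finite union. Consequently the complement of the bad set is dense in $\mathbb{R}^{m\times n}$, so every cube $(-\epsilon,\epsilon)^{mn}$ about the origin meets it; any $A$ in that intersection satisfies both (i) and (ii).

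I expect the main obstacle to be the bookkeeping that makes the measure-zero conclusion fully rigorous: verifying that each $M_r$ is genuinely a smooth submanifold of the stated dimension and that the $H_r$ are smooth, and treating the possibly non-compact, merely open domain $U$ by a countable exhaustion so that the ``image has measure zero'' fact applies stratum by stratum. The dimension inequality itself, which pins down the precise threshold $m\geq 2n$, is the conceptual heart of the argument and becomes immediate once the strata and their codimensions are in place.
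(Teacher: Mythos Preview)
The paper does not supply its own proof of this lemma; it is simply quoted from Hirsch's \emph{Differential Topology} (reference \cite{8}). Your argument is correct and is, in fact, essentially the classical proof one finds there: stratify the rank-deficient $m\times n$ matrices by the submanifolds $M_r$ of rank exactly $r$, use the translation maps $(x,M)\mapsto M-D\Phi(x)$ from $U\times M_r$ into $\mathbb{R}^{m\times n}$, and observe that the source dimension $n+r(m+n-r)$ is strictly below $mn$ once $m\ge 2n$, so each image is Lebesgue-null and the complement of their union is dense. The only bookkeeping points you flag---that the $M_r$ really are smooth submanifolds of the stated dimension, and that the open set $U$ must be exhausted by countably many compacta to invoke the measure-zero image fact---are exactly the right ones, and they present no difficulty.
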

Let $U$ be an open set in $\mathbb{R}^n$ and let $K \subseteq U$ be compact in $\mathbb{R}^n$. Consider the smooth mapping
$\Phi: U \rightarrow \mathbb{R}^m$. For $x=(x_1, \cdots, x_n) \in U$,  $\Phi(x)=(\Phi_1(x), \cdots, \Phi_m(x))=(\Phi_1(x_1, \cdots, x_n), \cdots, \Phi_m(x_1, \cdots, x_n))$. Denote by
\[|\Phi|_K^{(0)}=\max_{1\leq i\leq m}\sup_{x\in K}\{|\Phi_i(x)|\},\]
\[|\Phi|_K^{(1)}=\max\{|\Phi|_K^{(0)}, |\frac{\partial \Phi}{\partial x_1}|_K^{(0)}, \cdots, |\frac{\partial \Phi}{\partial x_n}|_K^{(0)}\}.\]
\begin{lemma}\label{lem3}\cite{8}
If $\Phi:U\rightarrow \mathbb{R}^m$ is an immersion on the compact set $K \subseteq U$, where
$U$ is an open subset of $\mathbb{R}^n$,
then there exists $\delta \in \mathbb{R}_{>0}$, such that for any smooth mapping $\Psi: U\rightarrow \mathbb{R}^m$ satisfying that
$|\Psi-\Phi|_K^{(1)}<\delta$, $\Psi$ is an immersion on $K$.
\end{lemma}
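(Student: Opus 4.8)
The plan is to encode the immersion condition in a single scalar quantity that depends continuously on the first derivatives of the map, and then to exploit the compactness of $K$ together with uniform continuity to make that condition stable under small $C^1$ perturbations. Recall that $\Phi$ is an immersion at a point $x$ precisely when the $m\times n$ Jacobian matrix $D\Phi(x)=(\partial\Phi_i/\partial x_j)$ has rank $n$, equivalently when the Gram determinant $g_\Phi(x):=\det\big(D\Phi(x)^{\mathsf T}D\Phi(x)\big)$ is strictly positive. Indeed $g_\Phi(x)$ equals the product of the squares of the singular values of $D\Phi(x)$, so it is always nonnegative and vanishes exactly when the rank drops below $n$.

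First I would observe that $g_\Phi$ is a continuous function on $K$ (it is a polynomial in the entries of the Jacobian), and that by hypothesis $g_\Phi(x)>0$ for every $x\in K$. Since $K$ is compact, $g_\Phi$ attains a strictly positive minimum $c>0$ on $K$. Next, since $\Phi$ is smooth and $K$ is compact, the entries $\partial\Phi_i/\partial x_j$ are uniformly bounded on $K$, say $|\partial\Phi_i/\partial x_j|\le M$ there; hence all the Jacobian matrices $D\Phi(x)$, $x\in K$, lie in the compact cube $\mathcal C=[-M-1,M+1]^{mn}\subseteq\mathbb{R}^{m\times n}$.

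Then I would invoke uniform continuity. Write $P(A)=\det(A^{\mathsf T}A)$ for $A\in\mathbb{R}^{m\times n}$; this is a polynomial, hence uniformly continuous on the compact set $\mathcal C$. Choose $\delta\in(0,1)$ so small that any two matrices in $\mathcal C$ whose entries differ by less than $\delta$ have $P$-values differing by less than $c/2$. Now suppose $\Psi$ is smooth with $|\Psi-\Phi|_K^{(1)}<\delta$. The definition of $|\cdot|_K^{(1)}$ forces $|\partial\Psi_i/\partial x_j-\partial\Phi_i/\partial x_j|<\delta$ on $K$ for all $i,j$, so for each $x\in K$ the matrix $D\Psi(x)$ lies in $\mathcal C$ and differs entrywise from $D\Phi(x)$ by less than $\delta$. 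Consequently $g_\Psi(x)=P(D\Psi(x))>P(D\Phi(x))-c/2\ge c/2>0$, whence $D\Psi(x)$ has rank $n$ and $\Psi$ is an immersion at $x$. As $x\in K$ was arbitrary, $\Psi$ is an immersion on $K$.

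The main obstacle is not any single computation but the passage from a pointwise, open condition (rank $=n$ at each point) to a uniform, quantitative bound valid across all of $K$ simultaneously: a bare continuity-and-openness argument would only yield a perturbation threshold depending on the point. Encoding the immersion property in the single continuous scalar $g_\Phi$ and using compactness to extract a uniform lower bound $c$ is exactly what overcomes this difficulty, after which the uniform continuity of the determinant polynomial $P$ transfers the bound to nearby maps.
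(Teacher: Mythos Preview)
Your proof is correct. The paper does not supply its own proof of this lemma; it simply cites Hirsch's \emph{Differential Topology} and uses the result as a black box, so there is nothing to compare against at the level of argument. Your approach---encoding the rank condition in the Gram determinant $g_\Phi(x)=\det\big(D\Phi(x)^{\mathsf T}D\Phi(x)\big)$, extracting a uniform positive lower bound $c$ on the compact set $K$, and then using uniform continuity of the polynomial $A\mapsto\det(A^{\mathsf T}A)$ on a bounded region of matrix space---is a standard and clean way to make the openness of the immersion condition quantitative, and it matches in spirit the argument one finds in Hirsch.
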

\begin{lemma}\label{lem11}
Let $(U, \phi, \mathbb{R}^k)$ and $(V, \psi, \mathbb{R}^m)$ be two local charts of the subcartesian space $S$ such that $U \cap V\neq \emptyset$.
Then for each $x \in U \cap V$, there exists a smooth mapping $F: \mathbb{R}^k \rightarrow \mathbb{R}^m$ such that $F \circ \phi|W=\psi|W$, where $W\subseteq U \cap V$ is a neighborhood of $x$.
\end{lemma}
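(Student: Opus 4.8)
The plan is to construct $F$ componentwise and to reduce the problem to the basic extension property of the subspace differential structure on $\phi(U)\subseteq\mathbb{R}^{k}$. Writing $\psi=(\psi_{1},\dots,\psi_{m})$, it suffices to produce for each $i\in\{1,\dots,m\}$ a smooth function $F_{i}\in C^{\infty}(\mathbb{R}^{k})$ together with a neighbourhood $W_{i}$ of $x$ contained in $U\cap V$ such that $F_{i}\circ\phi|W_{i}=\psi_{i}|W_{i}$; the map $F=(F_{1},\dots,F_{m})\colon\mathbb{R}^{k}\to\mathbb{R}^{m}$ and the neighbourhood $W=\bigcap_{i=1}^{m}W_{i}$ then satisfy $F\circ\phi|W=\psi|W$, which is the assertion.

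To produce the $F_{i}$, I would first observe that the coordinate functions of the chart $\psi$ are smooth. Indeed, $\psi\colon V\to\mathbb{R}^{m}$ is a diffeomorphism onto $\psi(V)$, hence in particular a $C^{\infty}$ map; since the $i$-th coordinate projection $\mathrm{pr}_{i}$ lies in $C^{\infty}(\mathbb{R}^{m})$, its restriction lies in $C^{\infty}(\psi(V))$, and therefore $\psi_{i}=\mathrm{pr}_{i}\circ\psi=\psi^{*}(\mathrm{pr}_{i})\in C^{\infty}(V)$. Restricting to $U\cap V$ and precomposing with the smooth inverse $\phi^{-1}\colon\phi(U\cap V)\to U\cap V$ gives $h_{i}:=\psi_{i}\circ\phi^{-1}=(\phi^{-1})^{*}(\psi_{i})\in C^{\infty}\big(\phi(U\cap V)\big)$, where $\phi(U\cap V)$ is an open subset of $\phi(U)\subseteq\mathbb{R}^{k}$ carrying the differential structure inherited from $\mathbb{R}^{k}$ (Definition \ref{def2}).

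The crux of the argument is the extension step. By Definition \ref{def2} the chart identifies $U\cap V$ with the subset $\phi(U\cap V)$ of $\mathbb{R}^{k}$, whose differential structure is the one generated by the restrictions of $C^{\infty}(\mathbb{R}^{k})$ and closed under the localization axiom (condition 3 of Definition \ref{def3}); this is exactly the content of the smooth-extension property recorded after the definition of smooth extension. Consequently a function belonging to $C^{\infty}\big(\phi(U\cap V)\big)$ is, near each of its points, the restriction of a genuine smooth function on $\mathbb{R}^{k}$. Applying this to $h_{i}$ at the point $\phi(x)$ yields an open neighbourhood $O_{i}\subseteq\mathbb{R}^{k}$ of $\phi(x)$ and a function $F_{i}\in C^{\infty}(\mathbb{R}^{k})$ with $F_{i}|_{O_{i}\cap\phi(U\cap V)}=h_{i}|_{O_{i}\cap\phi(U\cap V)}$.

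It remains to pull back. Setting $W_{i}=\phi^{-1}(O_{i})$, which is a neighbourhood of $x$ in $U\cap V$ since $\phi$ is a homeomorphism onto its image, we obtain $F_{i}\circ\phi=\psi_{i}$ on $W_{i}$, because there $F_{i}\circ\phi=h_{i}\circ\phi=\psi_{i}\circ\phi^{-1}\circ\phi=\psi_{i}$. Taking $W=\bigcap_{i=1}^{m}W_{i}$ and $F=(F_{1},\dots,F_{m})$ completes the construction. I expect the only genuinely non-formal point to be the extension step of the third paragraph: one must be certain that smoothness in the subspace differential structure of $\phi(U\cap V)\subseteq\mathbb{R}^{k}$ really does force local extendability to an ambient smooth function on $\mathbb{R}^{k}$, rather than yielding merely a function defined on the subset. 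This is precisely where the definition of a subcartesian chart and the attendant smooth-extension property are used; all other steps are routine compositions of smooth maps.
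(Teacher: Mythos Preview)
Your proof is correct and follows essentially the same route as the paper: both consider the transition map $\psi\circ\phi^{-1}$ on $\phi(U\cap V)\subseteq\mathbb{R}^{k}$ and invoke the local extendability of smooth functions on a differential subspace of $\mathbb{R}^{k}$ to produce the desired $F$. The only cosmetic difference is that the paper states the extension for the $\mathbb{R}^{m}$-valued map at once, whereas you unpack it componentwise into $F_{1},\dots,F_{m}$ and then intersect the neighbourhoods $W_{i}$; the underlying argument is identical.
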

\begin{proof}
Consider the mapping $\psi \circ \phi^{-1}: \phi(U \cap V) \rightarrow \mathbb{R}^m$. According to Definition \ref{def2}, it's a smooth mapping from the subcartesian space $\phi(U \cap V)$ to $\mathbb{R}^m$, where the differential structure of $\phi(U \cap V)$ is generated by restrictions of smooth functions on $\mathbb{R}^k$. It follows that for each $\phi(x) \in \phi(U\cap V)$, there exist a smooth mapping $F: \mathbb{R}^k \rightarrow \mathbb{R}^m$ and a neighborhood $\tilde W$ of $\phi(x)$ in $\phi(U\cap V)$, such that $F|\tilde W=\psi \circ \phi^{-1}|\tilde W$. Let $W=\phi^{-1}(\tilde W)$. Then the result follows immediately.
\end{proof}

\begin{lemma}\label{lem5}
Let $S$ be a subcartesian space with structural dimension $n$. Let $\Phi:S\rightarrow \mathbb{R}^m$ be a smooth mapping with $m\geq 2n$.
Then for any $\delta>0$, there exists a smooth mapping
$\Psi: S\rightarrow \mathbb{R}^m$ satisfying $\|\Phi(x)-\Psi(x)\|<\delta$ for every $x \in S$, where $\|.\|$ denotes the Euclidean norm. Besides, for every $x\in S$,  there exist some local chart $(U, \mathbb{R}^{k}, \phi)$ where $x \in U$, and a smooth extension $\Psi_x: \mathbb{R}^{k} \rightarrow \mathbb{R}^m$ of $\Phi$ at $x$, such  that  $\Psi_x$ is an immersion on a neighborhood $V$ of $\phi(x)$ in $\mathbb{R}^{k}$.
\end{lemma}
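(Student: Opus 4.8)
The plan is to obtain $\Psi$ as the limit of a sequence of maps, each differing from the previous one by a perturbation supported in a single chart, following the classical Whitney immersion argument transported into the subcartesian setting. First I would apply Lemma~\ref{lem2} to the given $\Phi$ to fix locally finite open covers $(U_j), (V_j), (W_j)$ with $cl(U_j)\subseteq V_j$, $cl(V_j)\subseteq W_j$, each $cl(W_j)$ compact, charts $(W_j,\mathbb{R}^{n_j},\phi_j)$ with $n_j\le n$, and smooth extensions $f_j$ of $\Phi$ on $W_j$. Using Theorem~\ref{thm2} (applied to the cover $\{V_j,\,S\setminus cl(U_j)\}$) I would choose $\mu_j\in C^\infty(S)$ with $\mu_j\equiv 1$ on a neighbourhood of $cl(U_j)$ and $\operatorname{supp}\mu_j\subseteq V_j$. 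Finally I allocate the error budget summably, say $\delta_j=\delta\,2^{-j}$, so that $\sum_j\delta_j=\delta$.

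The core is an induction that makes $\Psi$ locally immersive one chart at a time. Set $\Psi_0=\Phi$. Suppose $\Psi_{j-1}$ is a globally defined smooth map with $\|\Psi_{j-1}-\Phi\|<\sum_{i<j}\delta_i$ and with the property that, for every $i<j$, $\Psi_{j-1}$ admits in the chart $\phi_i$ a smooth extension that is an immersion on a neighbourhood of the compact set $\phi_i(cl(U_i))$. On $W_j$ the map $\Psi_{j-1}$ has a smooth extension $h_j:\mathbb{R}^{n_j}\to\mathbb{R}^m$: it equals $f_j$ plus the earlier perturbation terms, and to rewrite each such term in the $\phi_j$-coordinates I use Lemma~\ref{lem11} to express $\phi_i\circ\phi_j^{-1}$ on the overlap as the restriction of a smooth map $F_{ij}:\mathbb{R}^{n_j}\to\mathbb{R}^{n_i}$. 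Since $m\ge 2n\ge 2n_j$, Lemma~\ref{lem4} applies to $h_j$ on a neighbourhood of $\phi_j(cl(U_j))$ and produces a matrix $A_j$, with entries as small as desired, for which $y\mapsto h_j(y)+A_jy$ is an immersion there. I then set $\Psi_j=\Psi_{j-1}+\mu_j\cdot(A_j\,\phi_j)$, read as $\Psi_{j-1}$ off $\operatorname{supp}\mu_j$; condition 3 of Definition~\ref{def3} guarantees that $\Psi_j$ is a globally defined element of $C^\infty(S;\mathbb{R}^m)$.

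Two smallness requirements pin down $A_j$. First, $\|\mu_j\,(A_j\phi_j)\|<\delta_j$ on $S$, which is achievable because $\phi_j(cl(V_j))$ is bounded and $\mu_j$ is bounded. Second, on each of the finitely many compact pieces $\phi_i(cl(U_i))$, $i<j$, whose charts meet $\operatorname{supp}\mu_j$, the added term must be so $C^1$-small that, by Lemma~\ref{lem3}, the immersion property already established there is preserved. Only finitely many indices $i$ are constrained, by the local finiteness from Lemma~\ref{lem2}, so this is a finite system of conditions and $A_j$ can be chosen to satisfy all of them. Local finiteness of $(\operatorname{supp}\mu_j)$ also shows that near any point only finitely many terms are nonzero, so $\Psi=\lim_j\Psi_j$ is a well-defined smooth map with $\|\Psi-\Phi\|<\sum_j\delta_j=\delta$.

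For the immersivity clause, given $x\in S$ choose $j$ with $x\in U_j$; then $\mu_j\equiv 1$ near $x$, the stage-$j$ perturbation made the $\phi_j$-extension of $\Psi_j$ an immersion on a neighbourhood $V$ of $\phi_j(x)$, and the second smallness requirement forces every later stage to preserve this, so the $\phi_j$-extension of $\Psi$ itself is an immersion near $\phi_j(x)$. This yields the chart $(W_j,\mathbb{R}^{n_j},\phi_j)$ and the required smooth extension $\Psi_x$ of $\Psi$. I expect the main obstacle to be exactly this simultaneous bookkeeping: each new perturbation must keep the cumulative displacement summably below $\delta$ while remaining $C^1$-small on all previously immersive compact pieces so as not to destroy their immersivity. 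What makes the scheme feasible at every stage is that local finiteness reduces the preservation conditions to a finite set, and Lemma~\ref{lem3} converts a finite collection of compact immersive pieces into an open $C^1$-tolerance within which the induction can proceed; a secondary technical point, the patching of the chartwise perturbations into one globally smooth map on $S$, is handled by Lemma~\ref{lem11} together with axiom 3 of the differential structure.
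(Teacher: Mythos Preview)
Your proposal is correct and follows essentially the same approach as the paper's own proof: both build $\Psi$ as a locally finite sum $\Phi+\sum_j \mu_j\,(A_j\phi_j)$ via the covers of Lemma~\ref{lem2}, choose each $A_j$ small enough via Lemma~\ref{lem4} to create an immersive extension on $\phi_j(cl(U_j))$ and via Lemma~\ref{lem3} to preserve the finitely many previously established immersive pieces (using Lemma~\ref{lem11} for the chart transitions), and then invoke local finiteness to pass to the limit. The only cosmetic difference is that the paper expresses the new perturbation in the old chart $\phi_i$ when checking preservation, whereas you also rebuild the full extension $h_j$ in the new chart $\phi_j$; both formulations are equivalent.
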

\begin{proof}
Let $(U_j)_{j\in \mathbb{Z}_{>0}}, (V_j)_{j \in \mathbb{Z}_{>0}}, (W_j)_{j \in \mathbb{Z}_{>0}}$ be the open covers
satisfying the conditions in Lemma \ref{lem2}. For each $j\in \mathbb{Z}_{>0}$, we claim that there exist smooth function $\rho_j: S\rightarrow [0,1]$ such that $\rho_j(x)=1$ for $x \in cl(U_j)$ and $\rho_j(x)=0$ for $x \in S/cl(V_j)$. Consider the open cover $\{V_j, S/cl(U_j)\}$.
Let $\{(\mathscr{V}_i, g_i)\}$ be a subordinate partition on unity and $f=\sum g_i$ where the sum is over those $i$ for which $\mathscr{V}_i\cap cl(U_j)\neq \emptyset$. Then $f$ is smooth, is one on $cl(U_j)$, and zero on $S/cl(V_j)$.

We inductively define a sequence of maps $\Phi_k: S \rightarrow \mathbb{R}^m$ such
that $\Phi_k-\Phi$ is smooth and has support contained in $K_k=\cup_{j=1}^kcl(W_j)$ for each $k\in \mathbb{Z}_{>0}$. For
$k=1$, let $A_1\in \mathbb{R}^{m\times n}$ and define
\[\Phi_1(x)=\Phi(x)+\rho_1(x)A_1\phi_1(x).\]
Since $\rho_1(x)=1$ for $x \in cl(U_1)$.  We have $\Phi_1(x)=\Phi(x)+A_1\phi_1(x)$ for $x\in U_1$. Let $f_1$ be the smooth extension
of $\Phi$ on $W_1$ according to Lemma \ref{lem2}. Then it follows from  Lemma \ref{lem4} that  we can choose  $A_1$ such that (1) the smooth extension $F(y)=f_1(y)+A_1y$ of $\Phi_1$ on $U_1$ is an immersion on $\mathbb{R}^{n_1}$. Hence for each $x \in cl(U_1) \subseteq V_1$, there exists a smooth extension of $\Phi_1$ at $x$ which is also an immersion; (2) $\|\Phi_1(x)-\Phi(x)\|<\frac{\delta}{2}$ for $x \in S$.

Now suppose that we have defined $\Phi_1, \cdots, \Phi_k$ such that, for
each $j\in \{1, \cdots, k\}, \Phi_j- \Phi$ is smooth, has support in $K_j$, and satisfies $\|\Phi_j(x)-\Phi_{j-1}(x)\|<\frac{\delta}{2^j}$
for $x\in S$. Besides, for each $x \in \cup_{i=1}^{k}cl(U_i)$, suppose that there exists a smooth extension of $\Phi_k$ at $x$ which is also an immersion.

For $A_{k+1}\in \mathbb{R}^{m\times n}$ denote
\[\Phi_{k+1}(x)=\Phi_k(x)+\rho_{k+1}(x)A_{k+1}\phi_{k+1}(x).\]
We claim that we can choose $A_{k+1}$ such that (1) there exists a smooth extension of $\Phi_{k+1}$ at $x$ which is also an immersion, for each $x\in \cup_{i=1}^{k+1}cl(U_{i})$;  (2) $\|\Phi_{k+1}(x)-\Phi_k(x)\|<\frac{\delta}{2^{k+1}}$
for $x \in S$.

To prove the above claim. Let $x \in (\cup_{i=1}^{k}cl(U_{i})) \cap cl(W_{k+1})$. Suppose that $x \in cl(U_i)$ for some $i$ satisfying $1\leq i\leq k$. Let $(W_i, \mathbb{R}^{n_i}, \phi_i)$ and $(W_{k+1}, \mathbb{R}^{n_{k+1}}, \phi_{k+1})$ be two local charts of $S$ containing $x$.  It follows from Lemma \ref{lem11} that there exist a neighborhood $W_x$ of $x$ and a smooth mapping $F_{i{k+1}}: \mathbb{R}^{n_i}\rightarrow \mathbb{R}^{n_{k+1}}$ such that $F_{i{k+1}}\circ \phi_i|W_x=\phi_{k+1}|W_x$. Hence we have
\[\Phi_{k+1}(x)=\Phi_k(x)+\rho_{k+1}(x)A_{k+1}(F_{i{k+1}}\circ \phi_i)(x),\]
for $x \in W_x$. Then by shrinking $W_x$ if necessary, it follows from Lemma \ref{lem3} that we can choose $A_{k+1}$ sufficiently small such that there exists a smooth extension of $\Phi_{k+1}$ on $W_x$ which is an immersion on a local neighborhood of $\phi_i(x)$ in $\mathbb{R}^{n_i}$ containing $\phi(W_x)$. Since $(\cup_{i=1}^{k}cl(U_{i})) \cap cl(W_{k+1})\subseteq cl(W_{k+1})$ and $cl(W_{k+1})$ is compact, it follows that $(\cup_{i=1}^{k}cl(U_{i})) \cap cl(W_{k+1})$ is compact. Hence it can be covered by finitely many $W_x$, for $x \in (\cup_{i=1}^{k}cl(U_{i})) \cap cl(W_{k+1})$. This means that we can choose $A_{k+1}$ such that for each $x \in (\cup_{i=1}^{k}cl(U_{i})) \cap cl(W_{k+1})$, there exist a local chart $(W, \mathbb{R}^k, \phi)$ and smooth extension of $\Phi_{k+1}$ at $x$, which is an immersion in a neighborhood of $\phi(x)$ in $\mathbb{R}^k$.
For $x\in S/cl(W_{k+1})$, we have $\Phi_{k+1}(x)=\Phi_k(x)$. It follows that for each $x \in (\cup_{i=1}^{k}cl(U_{i}))/cl(W_{k+1})$, there exists a smooth extension of $\Phi_{k+1}$ at $x$, which is an immersion. For $x \in cl(U_{k+1})$, the existence of smooth extension of $\Phi_{k+1}$ at $x$ that is an immersion is ensured by Lemma \ref{lem4}. Finally, we can choose $A_{k+1}$  sufficiently small such that $\|\Phi_{k+1}(x)-\Phi_k(x)\|<\frac{\delta}{2^{k+1}}$ for $x \in S$. This completes the proof for the above claim.

Let $\Psi(x)=\lim_{k \rightarrow \infty}\Phi_k(x)$, this limit existing and	$\Psi$ being smooth since, for each $x \in S$, according to Lemma \ref{lem2}, there exists $N \in \mathbb{Z}_{>0}$ such that $y \notin W_{N+1}, W_{N+2}, \cdots$, for each $y$ in some neighbourhood of $x$.
 Hence $\Phi_N(y)=\Psi(y)$, for $y$ in some neighbourhood of $x$. Besides, since $U_i \subseteq W_i, i=N+1, N+2, \cdots$, it follows that $x \in S/\cup_{i=N+1}^{\infty}U_{i}$, which yields that $x \in \cup_{i=1}^{N}U_{i}$.
It follows from the above construction of $\Phi_N$ that there exists a smooth extension of $\Psi$ at $x$ which is an immersion. And $\|\Phi-\Psi\|<\delta$ for every $x \in S$. This completes the proof of the theorem.
\end{proof}

\begin{lemma}\label{lem6}
Let $S$ be a subcartesian space with structural dimension $n$. Let $\Phi:S\rightarrow \mathbb{R}^m$ be a smooth mapping with $m\geq 2n+1$. Then for any $\delta>0$,  there exists a smooth injective mapping $\Psi: S\rightarrow \mathbb{R}^m$ satisfying $\|\Phi(x)-\Psi(x)\|<\delta$ for every $s \in S$. Besides, for every $x\in S$,  there exist some local chart $(U, \mathbb{R}^{l}, \phi)$ where $x \in U$, and a smooth extension $\Psi_x: \mathbb{R}^{l} \rightarrow \mathbb{R}^m$ of $\Phi$ at $x$, such  that  $\Psi_x$ is an immersion on a neighborhood  of $\phi(x)$ in $\mathbb{R}^{l}$.
\end{lemma}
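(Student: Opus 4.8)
The plan is to follow the classical two-step Whitney strategy, now carried out in the chart-by-chart subcartesian framework already set up for Lemma \ref{lem5}. Since the conclusion asks for an injective map that still carries the local immersion-extension property, I would first reduce to the immersive case: apply Lemma \ref{lem5} to $\Phi$ with tolerance $\delta/2$ to obtain a smooth $\Phi'$ with $\|\Phi-\Phi'\|<\delta/2$ such that every point of $S$ admits a chart in which a smooth extension of $\Phi'$ is an immersion near the image of that point. It then suffices to produce an injective $\Psi$ with $\|\Phi'-\Psi\|<\delta/2$ that preserves this immersion-extension property, since then $\|\Phi-\Psi\|<\delta$ and both required properties hold. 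The extra hypothesis $m\geq 2n+1$ (rather than $m\geq 2n$) is exactly what will be consumed in the injectivity step.

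Next I would record the local injectivity that comes for free from the immersion property: because each point $x$ has a chart $(U,\mathbb{R}^l,\phi)$ and a smooth extension $\Psi'_x$ of $\Phi'$ that is an immersion near $\phi(x)$, and immersions are locally injective, $\Phi'$ is injective on a neighborhood of $x$ in $S$. Refining the covers $(U_j),(V_j),(W_j)$ of Lemma \ref{lem2} if necessary, I may assume $\Phi'$ is injective on each $cl(W_j)$, and I retain the bump functions $\rho_j$ built in the proof of Lemma \ref{lem5}, with $\rho_j=1$ on $cl(U_j)$ and $\rho_j=0$ off $cl(V_j)$. The plan is then an induction mirroring Lemma \ref{lem5}: construct $\Psi_k$ with $\Psi_k-\Phi'$ smooth and supported in $K_k=\cup_{j=1}^k cl(W_j)$, still enjoying the immersion-extension property, with $\|\Psi_k-\Psi_{k-1}\|<\delta/2^{k+1}$, and, as the new ingredient, injective on $L_k=\cup_{j=1}^k cl(U_j)$.

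The inductive step sets $\Psi_{k+1}(x)=\Psi_k(x)+\rho_{k+1}(x)\,c_{k+1}$ for a small constant vector $c_{k+1}\in\mathbb{R}^m$. Immersion is preserved for $c_{k+1}$ small by Lemma \ref{lem3}, and injectivity already present on the compact set $L_k$ survives for $c_{k+1}$ small, since injectivity of an immersion on a compact set is $C^1$-stable (Lemma \ref{lem3} together with a compactness estimate on the minimal separation of images). The heart of the matter is choosing $c_{k+1}$ so that $\Psi_{k+1}$ has no double point inside $L_{k+1}$ involving the newly perturbed piece: for $x\neq y$ one analyses $\Psi_k(x)-\Psi_k(y)+(\rho_{k+1}(x)-\rho_{k+1}(y))c_{k+1}$, and whenever $\rho_{k+1}(x)\neq\rho_{k+1}(y)$ the offending values of $c_{k+1}$ form a single point, while pairs with $\rho_{k+1}(x)=\rho_{k+1}(y)$ are handled by the within-chart injectivity or by the previous stage. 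Working in the finitely many charts covering the compact set $(\cup_{i\le k}cl(U_i))\cap cl(W_{k+1})$ and invoking the structural-dimension bound, the set of bad $c_{k+1}$ is the image of a set of covering dimension at most $2n$, hence of measure zero in $\mathbb{R}^m$ precisely because $2n<m$; so an arbitrarily small admissible $c_{k+1}$ exists. I would then pass to the limit $\Psi=\lim_k\Psi_k$, well defined, smooth, and still immersive-with-extensions by the local finiteness argument of Lemma \ref{lem5}, since near each point only finitely many $\rho_j$ are nonzero and $\Psi_k$ is eventually locally constant.

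The main obstacle I anticipate is the injectivity of the limit rather than of each finite stage: I must ensure that separations achieved at stage $k$ are not destroyed by the infinitely many later perturbations. This is where the geometric-series control $\|\Psi_k-\Psi_{k-1}\|<\delta/2^{k+1}$ together with a reserved-margin bookkeeping is essential; after achieving injectivity on $L_k$ with positive separation, one demands all subsequent perturbations be small enough, both in $C^0$ to keep the separation and in $C^1$ to keep immersion via Lemma \ref{lem3}, relative to that margin. Because the cover is locally finite, each point is finalized after finitely many stages, so any two distinct points eventually lie in a common compact $L_k$ on which $\Psi=\Psi_k$ is injective, yielding global injectivity; the measure-zero dimension count that consumes the extra ``$+1$'' in $m\geq 2n+1$ is the decisive quantitative input.
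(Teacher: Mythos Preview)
Your overall strategy coincides with the paper's: first invoke Lemma~\ref{lem5} to obtain a map $\Upsilon$ (your $\Phi'$) with local immersion-extensions, hence locally injective; then perturb by constant vectors weighted by bump functions, choosing the constants outside a measure-zero ``collision set'' whose small size is exactly where $m\ge 2n+1$ is consumed. The substantive difference is in how injectivity is organised. The paper takes the $\rho_j$ to be a \emph{partition of unity} subordinate to a cover $(U_j)$ on which $\Upsilon$ is injective, sets $\Psi_k=\Upsilon+\sum_{j\le k}\rho_j y_j$, and chooses each $y_k$ outside the image of
\[
\theta_k(p,q)=\frac{\Psi_{k-1}(p)-\Psi_{k-1}(q)}{\rho_k(p)-\rho_k(q)}\qquad(\rho_k(p)\neq\rho_k(q)).
\]
Injectivity of the limit is then proved by a \emph{backward} induction: if $\Psi(x)=\Psi(y)$, the avoidance condition $y_k\notin\theta_k(D_k)$ forces $\rho_k(x)=\rho_k(y)$ for every $k$, hence $\Upsilon(x)=\Upsilon(y)$; since $\sum_j\rho_j=1$, some $\rho_j(x)\neq 0$, so $x,y\in U_j$ where $\Upsilon$ is already injective. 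No margins are tracked at all.

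Your forward scheme with bump functions (not a partition of unity) and reserved margins can be made to work, but there is a gap in the inductive step as you have written it. When $\rho_{k+1}(x)=\rho_{k+1}(y)>0$ with $x\neq y$, both points lie in $cl(V_{k+1})\subseteq W_{k+1}$, and no choice of $c_{k+1}$ can separate their images; you appeal to ``within-chart injectivity'', i.e.\ injectivity of $\Phi'$ on $cl(W_{k+1})$. But the map under discussion is $\Psi_k=\Phi'+\sum_{j\le k}\rho_j c_j$, and nothing in your bookkeeping guarantees that these \emph{earlier} perturbations preserved injectivity on $cl(W_{k+1})$: your reserved margins protect the already-processed compact $L_k$, not the future charts $cl(W_{k+1}),cl(W_{k+2}),\dots$. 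The fix is available---by local finiteness each $cl(W_l)$ meets only finitely many $\operatorname{supp}\rho_j$, so one can impose in advance that the corresponding $c_j$ respect the $C^1$-stability margin of $\Phi'$ on $cl(W_l)$---but this extra layer of anticipation is precisely what the paper's partition-of-unity backward induction eliminates. Once you see that device, your margin bookkeeping and your worry about ``separations being destroyed by later perturbations'' both become unnecessary.
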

\begin{proof}
According to Lemma \ref{lem5}, let $\Upsilon: S \rightarrow \mathbb{R}^{m}$ be a smooth mapping such that $\|\Upsilon(x)-\Phi(x)\|<\delta$ for every $x \in S$. Besides, for every $x\in S$,  there exist some local chart $(U, \mathbb{R}^{l}, \phi)$ where $x \in U$, and a smooth extension $\Upsilon_x: \mathbb{R}^{l} \rightarrow \mathbb{R}^m$ of $\Phi$ at $x$, such  that  $\Upsilon_x$ is an immersion on a neighborhood of $\phi(x)$ in $\mathbb{R}^{l}$. It follows that for each $x \in S$ there exists a  neighbourhood $U_x$ of $x$ such that $\Upsilon|U_x$ is injective. By using similar constructions as  given in Lemma \ref{lem2}, we can get a locally finite open cover $(U_j)_{j\in \mathbb{Z}_{>0}}$ of $S$ by relatively compact open sets such that
$\Upsilon|U_j$ is injective.

For each $j\in \mathbb{Z}_{>0}$, let $\rho_j$ be a smooth
function on $S$ taking values in $[0, 1]$ and satisfying $\rho_j(x) = 0$ for $x\in S/U_j$, and suppose
that $\sum_{j=1}^{\infty}\rho_j(x)=1$ for every $x \in S$, i.e., $(\rho_j)_{j\in \mathbb{Z}_{>0}
}$ is a partition of unity. For any sequence
$(y_j)_{j\in \mathbb{Z}_{>0}}$ in $\mathbb{R}^{m}$ satisfying $\|y_j\|<\frac{\delta}{2^{j+1}}$ for $x\in U_j$. for $k\in \mathbb{Z}_{>0}$ define
\[\Psi_k(x)=\Upsilon(x)+\sum_{j=1}^{k}\rho_j(x)y_j.\]
By Lemma \ref{lem3} above the sequence $(y_j)_{j\in \mathbb{Z}_{>0}}$ can be chosen such that for every $x\in S$,  there exist some  local chart $(U, \mathbb{R}^l, \phi)$ and  a smooth extension $\Psi_{k_x}:\mathbb{R}^l \rightarrow \mathbb{R}^m$ of $\Psi_k$ at $x$,  satisfying that  $\Psi_{k_x}$ is an immersion on a neighborhood of $\phi(x)$ in $\mathbb{R}^l$,  for each $k\in  \mathbb{Z}_{>0}$.

For each $k\in \mathbb{Z}_{>0}$, let
\[D_k=\{(p,q) \in S \times S|\rho_k(p)\neq \rho_k(q)\}\]
and define the mapping $\theta_k: D_k \rightarrow \mathbb{R}^m$ by
\[\theta_k(x,y)=\frac{\Psi_{k-1}(x)-\Psi_{k-1}(y)}{\rho_k(x)-\rho_k(y)}\]
Since ${\theta_k(D_k)}$ has measure zero ($m \geq 2n+1$),  the sequence $(y_j)_{j\in \mathbb{Z}_{>0}}$ can be chosen by additionally asking that
$y_k \notin \theta_k(D_k)$. We then define $\Psi(x)=\lim_{k \rightarrow \infty}\Psi_k(x)$. It follows that $\Psi$ is a smooth mapping satisfying that $\|\Psi(x)-\Phi(x)\|<\delta(x)$. Besides, for every $x\in S$,  there exists a smooth extension $\Psi_x: \mathbb{R}^l \rightarrow \mathbb{R}^m$ of $\Psi$ at $x$, with local charts $(U, \mathbb{R}^l, \phi)$,  satisfying that  $\Psi_x$ is an immersion on a neighborhood of $\phi(x)$ in $\mathbb{R}^l$.

It remains to show that $\Psi$ is injective. Suppose that $\Psi(x)=\Psi(y)$. Let $N \in \mathbb{Z}_{>0}$ be
sufficiently large that $\rho_{j+1}(x)=\rho_{j+1}(y)$ and $\Psi_j(x)=\Psi_j(y)=\Psi(x)=\Psi(y)$ for every $j\geq N$.
Since $y_{N-1} \notin \theta_{N-1}(D_{N-1})$ we have $\rho_{N-1}(x)=\rho_{N-1}(y)$. Otherwise, since $\Psi_{N-1}(x)+\rho_{N}(x)y_{N}=\Psi_{N}(x)=\Psi_{N}(y)=\Psi_{N-1}(y)+\rho_{N}(y)y_{N}$, we have
$y_{N}=\frac{\Psi_{N-1}(x)-\Psi_{N-1}(y)}{\rho_{N}(x)-\rho_{N}(y)}\in \theta_{N}(D_{N})$. This makes contradiction.

Consequently, we have $\Psi_{N-1}(x)=\Psi_{N-1}(y)$. We can proceed inductively backwards to
conclude that $\rho_j(x)=\rho_j(y)$ and	$\Psi_j(x)=\Psi_j(y)$ for every $j\in \mathbb{Z}_{>0}$. Consequently, $\Upsilon(x)=\Upsilon(y)$.
Since $\rho_j(x) \neq 0$ for at least one $j \in  \mathbb{Z}_{>0}$, it follows that $x,y \in U_j$ for some $j \in \mathbb{Z}_{>0}$. However,
this is in contradiction with the fact that $\Upsilon|U_j$ is injective. This completes the proof of the above theorem.
\end{proof}

\begin{definition}
Let $S$ be  a topological space. A function $u: S\rightarrow [-\infty,\infty)$
is an exhaustion function for $S$ if the sublevel set $u^{-1}([-\infty, a))$ is a relatively
compact subset of $S$ for every $a \in \mathbb{R}$.
\end{definition}
\begin{lemma}\label{lem7}
A continuous map $u: S\rightarrow [-\infty, \infty)$ is an exhaustion function for a topological space $S$ if and only if, for any
$a\in \mathbb{R}$, there exists a compact set $K \subseteq S$ such that $u(x)>a$ for every $x \in S/K$.
\end{lemma}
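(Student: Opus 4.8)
The plan is to prove the two implications separately, in each case translating between the sublevel-set formulation of relative compactness in the definition and the complement-of-a-compact-set formulation in the statement, and to use the continuity of $u$ only where it is genuinely needed, namely to guarantee that the relevant non-strict sublevel sets are closed. I emphasize at the outset that $S$ is assumed to be merely a topological space, so I must avoid any argument that silently invokes the Hausdorff property (for example, that compact sets are automatically closed). The device that keeps the argument honest is the elementary fact that a closed subset of a compact set is compact, which holds in any topological space.

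For the forward direction I assume $u$ is an exhaustion function and fix $a \in \mathbb{R}$. I would set $K = cl(u^{-1}([-\infty, a+1)))$. By the definition of an exhaustion function the set $u^{-1}([-\infty, a+1))$ is relatively compact, so its closure $K$ is compact. If $x \in S \setminus K$ then in particular $x \notin u^{-1}([-\infty, a+1))$, whence $u(x) \geq a+1 > a$; thus $u(x) > a$ for every $x \in S \setminus K$, as required.

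For the converse I assume that for every $a$ there is a compact $K$ with $u(x) > a$ on $S \setminus K$, and I fix $a$. First I would observe that the chosen $K$ contains the non-strict sublevel set: if $u(x) \leq a$ then $x$ cannot lie in $S \setminus K$, so $x \in K$, giving $u^{-1}([-\infty, a]) \subseteq K$. Next, since $[-\infty, a]$ is closed in $[-\infty, \infty)$ and $u$ is continuous, the set $u^{-1}([-\infty, a])$ is closed in $S$; as it contains the strict sublevel set $u^{-1}([-\infty, a))$, it also contains its closure, so $cl(u^{-1}([-\infty, a))) \subseteq u^{-1}([-\infty, a]) \subseteq K$. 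Finally, $cl(u^{-1}([-\infty, a)))$ is a closed subset of the compact set $K$ and is therefore itself compact, which is precisely the statement that $u^{-1}([-\infty, a))$ is relatively compact.

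The routine verifications really are routine; the one step I would flag as the main obstacle is the compactness conclusion in the converse. It is tempting to argue that $cl(u^{-1}([-\infty,a)))$ is compact simply because it sits inside the compact set $K$, but a subset of a compact set need not be compact. The correct route is to exhibit the closure as a \emph{closed} subset of $K$ — using the continuity of $u$ to make $u^{-1}([-\infty,a])$ closed — and then to invoke the general fact that closed subsets of compact spaces are compact. This is exactly what makes the argument valid without any Hausdorff assumption on $S$.
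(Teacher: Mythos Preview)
Your proof is correct and follows essentially the same approach as the paper's: in the forward direction take (the closure of) a sublevel set as $K$, and in the converse exhibit the relevant sublevel set as a closed subset of the given compact $K$. Your version is in fact more careful than the paper's, explicitly distinguishing the strict and non-strict sublevel sets and avoiding any silent Hausdorff assumption; the paper simply asserts that $u^{-1}([-\infty,a])$ is compact in both directions without spelling out these points.
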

\begin{proof}
First suppose that $u$ is an exhaustion function and let $a \in \mathbb{R}_{>0}$. Since $u$ is an
exhaustion function, $u^{-1}([-\infty, a])$ is compact. Moreover, $u(x)>a$ for every $x\in S/u^{-1}([-\infty, a])$.
For the converse, suppose that, for any $a \in \mathbb{R}_{>0}$, there exists a compact set $K \subseteq S$
such that $u(x)>a$ for every $x \in S/K$. Let $a \in \mathbb{R}$ and note that $u^{-1}([-\infty, a])$ is compact,
being a closed subset of a compact set, showing that $u$ is an exhaustion function.
\end{proof}

\begin{lemma}\label{lem8}
Let $S$ be a subcartesian space. Then there exists a smooth exhaustion function on $S$.
\end{lemma}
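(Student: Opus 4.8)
The plan is to build the exhaustion function as a weighted sum of a partition of unity with compact supports, and then verify the exhaustion property through the criterion of Lemma \ref{lem7} rather than directly from the definition. First I would invoke Theorem \ref{thm2}: since $S$ is Hausdorff, locally compact and second countable, any open cover of $S$ admits a countable partition of unity $\{f_i\}_{i\in\mathbb{Z}_{>0}}$ whose members have compact support; taking the cover to be $\{S\}$ itself already suffices. I would record two features that are used repeatedly below: the collection of supports $\{\operatorname{supp} f_i\}$ is locally finite, and $\sum_{i=1}^{\infty}f_i(x)=1$ for every $x\in S$.

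Next I would set
\[
u(x)=\sum_{i=1}^{\infty} i\, f_i(x),
\]
and claim that $u\in C^{\infty}(S)$. The point is that local finiteness of the supports means every $x\in S$ has a neighbourhood on which all but finitely many $f_i$ vanish, so that $u$ agrees there with a finite sum of the form $\sum_{i} i f_i$; such a finite sum lies in $C^{\infty}(S)$ by condition 2 of Definition \ref{def3}. Smoothness of the globally defined $u$ then follows from the locality (sheaf) axiom, condition 3 of Definition \ref{def3}. Since each $f_i\ge 0$, one also has $u\ge 0$, so $u$ indeed maps into $[-\infty,\infty)$.

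The core step is the exhaustion estimate, which I would phrase via Lemma \ref{lem7}. I would fix $a\in\mathbb{R}$, choose an integer $N\ge a$, and put $K=\bigcup_{i=1}^{N}\operatorname{supp} f_i$, a finite union of compact sets and hence compact. For $x\in S\setminus K$ one has $f_i(x)=0$ for $1\le i\le N$, whence $\sum_{i>N}f_i(x)=1$, and therefore
\[
u(x)=\sum_{i>N} i\, f_i(x)\ \ge\ (N+1)\sum_{i>N}f_i(x)=N+1>a.
\]
By Lemma \ref{lem7}, the existence of such a compact $K$ for every $a$ shows that $u$ is an exhaustion function, which completes the argument.

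I expect the only genuine subtlety to be the smoothness claim: unlike in the manifold setting, smoothness here is a statement inside the category of differential spaces, so one must deduce it from the axioms of Definition \ref{def3} — in particular from the locality condition 3, which upgrades the local finite-sum description to global membership in $C^{\infty}(S)$ — rather than from any coordinate computation. The exhaustion estimate itself is routine once a partition of unity with compact supports is in hand, and the standing second countability and local compactness hypotheses are exactly what make Theorem \ref{thm2} applicable.
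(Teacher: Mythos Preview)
Your proof is correct and follows essentially the same approach as the paper: both construct $u=\sum_{i\ge 1} i\,f_i$ from a partition of unity with compact supports (via Theorem~\ref{thm2}) and verify exhaustion through Lemma~\ref{lem7} by taking $K$ to be the union of the first $N$ supports. If anything, you are more explicit than the paper in justifying $u\in C^{\infty}(S)$ via the locality axiom of Definition~\ref{def3}, where the paper simply says ``it's easy to see.''
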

\begin{proof}
Since $S$ is locally compact, Hausdorff and second countable, it possesses a countable open cover by relatively compact open sets.
According to Theorem \ref{thm2}, there exists a partition of unity $\{\rho_i\} \in C^{\infty}(S)$ subordinate to such an open cover.
Consider the function $f=\sum_{j=1}^{\infty}j\rho_j$. It's easy to see that $f \in C^{\infty}(S)$. We claim that $f$ is an exhaustion function.
Indeed, let $a \in \mathbb{R}$ and let $N \in \mathbb{Z}_{>0}$ be such that $N>a$. Let $K$ be a
compact set containing the supports of $\rho_1, \cdots, \rho_N$. We claim that $f^{-1}((-\infty, a])\subseteq K$. Indeed,
suppose that $x \notin K$, then we have $x \notin \cup_{j=1}^N \text{supp}(\rho_j)$. It follows that $\sum_{j=N+1}^{\infty}\rho_j(x)=\sum_{j=1}^{\infty}\rho_j(x)=1$. Hence $f(x)=\sum_{j=1}^{\infty}j\rho_j(x)=\sum_{j=N+1}^{\infty}j\rho_j(x)
> N\sum_{j=N+1}^{\infty}\rho_j(x)=N>a$. By Lemma \ref{lem7} we know that $f$ is an exhaustion function on $S$. Hence  the result follows immediately.
\end{proof}

\begin{definition}
For  topological space $S$ and $T$, a continuous mapping $f: S \rightarrow T$ is proper if $f^{-1}(K)$ is compact for every compact subset $K \subseteq T$.
\end{definition}
\begin{lemma}\label{lem12}\cite{8}
Let $S$ be a Hausdorff topological space and $T$ be a locally compact Hausdorff topological space. Let $f: S \rightarrow T$ be a proper injective mapping. Then $f: S \rightarrow f(S)$ is a homeomorphism provided that $f(M)$ is equipped with the subspace topology inherited from $T$.
\end{lemma}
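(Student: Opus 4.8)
The plan is to prove that $f$ is a closed map from $S$ into $T$, and then to deduce the homeomorphism assertion from the general fact that a continuous closed bijection onto its image is a homeomorphism. Once $f$ is known to be closed as a map into $T$, then for any closed $C \subseteq S$ the image $f(C)$ is closed in $T$, so $f(C) = f(C) \cap f(S)$ is closed in the subspace $f(S)$. Since $f: S \to f(S)$ is a continuous bijection (injectivity is assumed, surjectivity onto $f(S)$ is automatic), the identity $(f^{-1})^{-1}(C) = f(C)$ shows that $f^{-1}: f(S) \to S$ carries closed sets to closed sets and is therefore continuous. Hence $f: S \to f(S)$ is a homeomorphism.

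It thus remains to establish that $f$ is closed, and this is where both the properness of $f$ and the local compactness of $T$ enter. Let $C \subseteq S$ be closed and let $y \in T \setminus f(C)$; I would show that $T \setminus f(C)$ is a neighborhood of $y$. Since $T$ is locally compact and Hausdorff, there is an open neighborhood $W$ of $y$ whose closure $cl(W)$ is compact. By properness, $f^{-1}(cl(W))$ is compact, and consequently $C \cap f^{-1}(cl(W))$, being a closed subset of a compact set, is itself compact.

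The key computation is the identity $f\bigl(C \cap f^{-1}(cl(W))\bigr) = f(C) \cap cl(W)$, which follows directly from the definitions: a point of $f(C) \cap cl(W)$ is of the form $f(c)$ with $c \in C$ and $f(c) \in cl(W)$, that is, $c \in C \cap f^{-1}(cl(W))$, and conversely. Therefore $f(C) \cap cl(W)$ is the continuous image of a compact set, hence compact, and hence closed in $T$ because $T$ is Hausdorff. Since $y \in W \subseteq cl(W)$ but $y \notin f(C)$, the point $y$ does not lie in the closed set $f(C) \cap cl(W)$, so there is an open neighborhood $W'$ of $y$ disjoint from $f(C) \cap cl(W)$. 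Then $W \cap W'$ is an open neighborhood of $y$ disjoint from $f(C)$: any point of $f(C)$ lying in $W \cap W' \subseteq cl(W)$ would belong to $f(C) \cap cl(W)$, which $W'$ avoids. Thus $T \setminus f(C)$ is open, $f(C)$ is closed, and $f$ is a closed map.

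The main obstacle is precisely this proof that $f$ is closed; once it is in hand the homeomorphism conclusion is routine. Within that step, the essential point is the interplay between properness, which turns the preimage of the compact set $cl(W)$ into a compact set, and local compactness of $T$, which guarantees the existence of the relatively compact neighborhood $W$ in the first place. Neither hypothesis can be dropped, and it is the identity $f\bigl(C \cap f^{-1}(cl(W))\bigr) = f(C) \cap cl(W)$ together with the Hausdorff property of $T$ (used to pass from compact to closed) that makes the local argument at $y$ go through.
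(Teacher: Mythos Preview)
The paper does not actually prove this lemma: it is stated with a citation to Hirsch~\cite{8} and no proof environment follows. So there is no ``paper's own proof'' to compare against.

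Your argument is correct and is the standard one. You first show $f$ is closed as a map into $T$ by localizing at a point $y\notin f(C)$, using local compactness of $T$ to obtain a relatively compact neighbourhood $W$ of $y$, properness to make $f^{-1}(cl(W))$ compact, and the identity $f(C\cap f^{-1}(cl(W)))=f(C)\cap cl(W)$ together with Hausdorffness of $T$ to conclude that $f(C)\cap cl(W)$ is closed; the routine separation then produces an open neighbourhood of $y$ missing $f(C)$. Passing to the subspace $f(S)$ and invoking that a continuous closed bijection is a homeomorphism finishes the job. Note that continuity of $f$ is implicit in the paper's definition of ``proper'', so your appeal to it is justified. (Incidentally, your proof never uses that $S$ is Hausdorff, so the hypothesis in the lemma is slightly stronger than necessary for this argument.)
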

The following result indicates embedding theorem for subcartesian space.
\begin{theorem}\label{thm1}
Let $S$ be a subcartesian space with structural dimension $n$. Then $S$ is diffeomorphic to a subset of $\mathbb{R}^m$, where $m\geq 2n+1$.
\end{theorem}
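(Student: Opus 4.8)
The plan is to realize $S$ as an embedded subset of $\mathbb{R}^m$ by producing a single smooth map $S\to\mathbb{R}^m$ that is simultaneously an injective immersion (in the subcartesian sense supplied by Lemma \ref{lem6}) and \emph{proper}, and then to upgrade this map to a diffeomorphism onto its image. The dimension hypothesis $m\geq 2n+1$ is exactly the one demanded by Lemma \ref{lem6}, so no further dimension bookkeeping is needed. First I would manufacture a starting map that already controls behaviour at infinity: by Lemma \ref{lem8} there is a smooth exhaustion function $u:S\to[0,\infty)$, and I set $\Phi_0=(u,0,\dots,0):S\to\mathbb{R}^m$, which is smooth. Applying Lemma \ref{lem6} to $\Phi_0$ with $\delta=1$ yields a smooth injective map $\Psi:S\to\mathbb{R}^m$ with $\|\Psi(x)-\Phi_0(x)\|<1$ for all $x$, and with the property that at every $x\in S$ there is a local chart $(U,\mathbb{R}^l,\phi)$ and a smooth extension $\Psi_x:\mathbb{R}^l\to\mathbb{R}^m$ of $\Psi$ that is an immersion on a neighborhood of $\phi(x)$.

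Next I would verify that $\Psi$ is proper, which is where the exhaustion function pays off. Comparing first coordinates gives $\|\Psi(x)\|\geq|\Psi_1(x)|\geq\Psi_1(x)>u(x)-1$. Hence for any compact $K\subseteq\mathbb{R}^m$ with $\|y\|\leq R$ on $K$, every $x\in\Psi^{-1}(K)$ satisfies $u(x)<R+1$, so $\Psi^{-1}(K)\subseteq u^{-1}([0,R+1))$, which is relatively compact by the exhaustion property (Lemma \ref{lem7}); being closed, $\Psi^{-1}(K)$ is compact, so $\Psi$ is proper. Then Lemma \ref{lem12}, applied with $S$ Hausdorff and $\mathbb{R}^m$ locally compact Hausdorff, shows that $\Psi:S\to\Psi(S)$ is a homeomorphism onto its image equipped with the subspace topology.

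Finally I would show that $\Psi$ is a $C^\infty$ diffeomorphism onto $\Psi(S)$, where $\Psi(S)\subseteq\mathbb{R}^m$ carries the differential structure induced by restriction of smooth functions on $\mathbb{R}^m$. Smoothness of $\Psi$ into $\Psi(S)$ is immediate from its smoothness into $\mathbb{R}^m$. For the inverse I expect the main work. Fix $x$ and use the local factorization $\Psi|_U=\Psi_x\circ\phi$ with $\Psi_x$ an immersion at $\phi(x)$. Since the derivative $D\Psi_x(\phi(x))$ is injective, some $l\times l$ minor of its Jacobian is nonsingular; choosing the corresponding coordinate projection $P:\mathbb{R}^m\to\mathbb{R}^l$, the composite $P\circ\Psi_x$ is a local diffeomorphism near $\phi(x)$ by the inverse function theorem, so with $\sigma$ its local inverse the map $\pi=\sigma\circ P$ is smooth on a neighborhood of $\Psi(x)$ in $\mathbb{R}^m$ and satisfies $\pi\circ\Psi_x=\mathrm{id}$, whence $\phi=\pi\circ\Psi$ on a neighborhood of $x$. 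Now for any $g\in C^\infty(S)$, writing $g=G\circ\phi$ locally with $G\in C^\infty(\mathbb{R}^l)$ (possible because $\phi$ is a chart), we obtain $g\circ\Psi^{-1}=G\circ\pi$ locally on $\Psi(S)$, a restriction of the smooth function $G\circ\pi$ on $\mathbb{R}^m$; by condition 3 of Definition \ref{def3} this shows $g\circ\Psi^{-1}\in C^\infty(\Psi(S))$. Hence $\Psi^{-1}$ is $C^\infty$, and $\Psi$ is the desired diffeomorphism onto a subset of $\mathbb{R}^m$.

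The main obstacle is precisely the last step: converting the Euclidean immersion data furnished by Lemma \ref{lem6} into genuine smoothness of $\Psi^{-1}$ within the differential-space category. This requires building the local smooth retractions $\pi$ on $\mathbb{R}^m$ and checking that they reproduce the chart coordinates $\phi$ as $\pi\circ\Psi$, so that every pullback $g\circ\Psi^{-1}$ is locally a restriction of a smooth function on $\mathbb{R}^m$; the properness argument and the appeal to Lemma \ref{lem12} are comparatively routine once the exhaustion function is in hand.
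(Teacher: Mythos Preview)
Your proposal is correct and follows essentially the same route as the paper: build $\Phi=(u,0,\dots,0)$ from a smooth exhaustion function (Lemma \ref{lem8}), perturb via Lemma \ref{lem6} to an injective $\Psi$ with immersive local extensions, deduce properness from the $\delta=1$ bound, apply Lemma \ref{lem12} for a homeomorphism, and then verify smoothness of $\Psi^{-1}$ using the local immersion data. Your construction of the local retraction $\pi=\sigma\circ P$ via a coordinate projection and the inverse function theorem is simply a more explicit rendering of what the paper phrases as ``due to the local representatives for the immersion $\Psi_x$''.
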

\begin{proof}
According to Lemma \ref{lem8}, let $u$ be a smooth exhaustion function on $S$.
Now define $\Phi: S\rightarrow \mathbb{R}^m$ by
\[\Phi(x)=(u(x), 0, \cdots, 0).\]
By Lemma  \ref{lem6}, let $\Psi: S \rightarrow \mathbb{R}^m$ be a smooth injective mapping satisfying that $\|\Psi(x)-\Phi(x)\| < 1$ for
every $x\in S$. Besides, for every $x\in S$,  there exists a smooth extension $\Psi_x: \mathbb{R}^l \rightarrow \mathbb{R}^m$ of $\Psi$ at $x$, with local charts $(U, \mathbb{R}^l, \phi)$,  satisfying that  $\Psi_x$ is an immersion on a neighborhood of $\phi(x)$ in $\mathbb{R}^l$.

We claim that $\Psi$ is proper. Let $K\subseteq \mathbb{R}^m$ be compact. If $x \in \Psi^{-1}(K)$ then
\[\|\Psi(x)\|\leq \text{sup}\{\|x\||x\in K\}=C_k,\]
and so $\|\Phi(x)\| \leq C_K+1$. Since $\|\Phi(x)\|=u(x)$, we have \[\Psi^{-1}(K)\subseteq u^{-1}([-\infty, C_K+1]).\] Since
$u$ is an exhaustion function we conclude that 	$\Psi^{-1}(K)$ is compact, being a closed subset of
the compact set $u^{-1}([-\infty, C_K+1])$. It follows from Lemma \ref{lem12} that $\Psi: S \rightarrow \Psi(S)$ is a homeomorphism.

It remains to show that: (1) for any  $f \in C^{\infty}(\mathbb{R}^m)$, $\Psi^*f \in C^{\infty}(S)$; (2) for any $g \in C^{\infty}(S)$, $(\Psi^{-1})^*g \in C^{\infty}(\Psi(S))$, with the differential structure $C^{\infty}(\Psi(S))$ generated by restrictions of smooth differential structure on $\mathbb{R}^m$.

To prove (1), let $\Psi=(\Psi_1, \cdots, \Psi_m)$. Since $\Psi$ is smooth, it follows that $\Psi_i \in C^{\infty}(S)$, for $i=1, \cdots, m$. Since $f \in C^{\infty}(\mathbb{R}^m)$, it follows from condition 2 in Definition 1 that $\Psi^*f=f(\Psi_1, \cdots, \Psi_m) \in C^{\infty}(S)$.
To prove (2), it follows from Lemma \ref{lem6} that for each $x \in S$, there exists a smooth extension $\Psi_x: \mathbb{R}^l\rightarrow \mathbb{R}^m$ of $\Psi$ at $x$, with local chart $(U, \mathbb{R}^l, \phi)$, such that $\Psi_x$ is an immersion on a neighborhood $V$ of $\phi(x)$ in $\mathbb{R}^l$. By shrinking $V$ if necessary, there exists a smooth function $g_x$ on $\mathbb{R}^l$, such that $g|V\cap \phi(U)=g_x\circ \phi|\phi^{-1}(V)\cap U$. Due to the local representatives for the immersion $\Psi_x$,  there exist a neighborhood $W$ of $\Psi(x)$ in $\mathbb{R}^m$,  and $h\in C^{\infty}(W)$, such that
$h|W\cap \Psi(S)=(\Psi^{-1})^*g|W\cap \Psi(S)$. By using bump function, we know that for each $y \in \Psi(S)$, there exists a function $h_y \in C^{\infty}(\mathbb{R}^m)$, such that $(\Psi^{-1})^*g|W_y=h_y|W_y$, where $W_y$ is a neighborhood of $y$ in $\Psi(S)$. It follows that
$(\Psi^{-1})^*g \in C^{\infty}(\Psi(S))$. This completes the proof of (2). Hence the result of the theorem follows immediately.
\end{proof}

\begin{corollary}(The Whitney Embedding Theorem)
If $M$ is a smooth, paracompact, Hausdorff and connected manifold of dimension $n$, then there exists a proper smooth embedding of $M$ in
$\mathbb{R}^{2n+1}$.
\end{corollary}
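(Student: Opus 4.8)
The plan is to realize the smooth manifold $M$ as a subcartesian space of structural dimension at most $n$ and then invoke Theorem \ref{thm1} together with the finer properties of the embedding map that its proof extracts. First I would equip $M$ with its usual ring $C^{\infty}(M)$ of smooth real-valued functions and check that this is a differential structure in the sense of Definition \ref{def3}: condition~1 holds because the coordinate functions supplied by the smooth atlas generate the manifold topology, condition~2 is the closure of $C^{\infty}(M)$ under composition with smooth maps $\mathbb{R}^k\to\mathbb{R}$, and condition~3 is the standard locality (sheaf) property of smoothness. Since every point of $M$ has a chart diffeomorphic to an open subset of $\mathbb{R}^n$, the space $(M, C^{\infty}(M))$ is a subcartesian space whose structural dimension at each point satisfies $n_x\le n$; hence its structural dimension is at most $n$.

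Next I would verify that $M$ meets the standing assumptions of the paper. Hausdorffness and connectedness are hypotheses of the corollary, and the remaining requirement, second countability, follows from the classical fact that a connected paracompact Hausdorff manifold is second countable. Thus Theorem \ref{thm1} applies: since the structural dimension of $M$ is at most $n$, the value $m=2n+1$ satisfies $m\ge 2n+1$, and the theorem yields a map $\Psi: M\to\mathbb{R}^{2n+1}$ that is a diffeomorphism onto its image in the category of differential spaces.

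It then remains to upgrade this differential-space diffeomorphism to a proper smooth embedding in the classical sense, and this reconciliation is the main point requiring care. The proof of Theorem \ref{thm1}, through Lemma \ref{lem6} and Lemma \ref{lem12}, already produces $\Psi$ as a smooth, injective, proper map that is a homeomorphism onto $\Psi(M)$ and that admits, at each $x$, a smooth extension $\Psi_x$ on a chart which is an immersion near $\phi(x)$. I would observe that when $M$ is a manifold the distinction between differential-space smoothness and classical smoothness disappears: a chart $\phi: U\to\mathbb{R}^n$ has open image, so $\Psi_x$ agrees with $\Psi\circ\phi^{-1}$ on the open set $\phi(U)$, and the immersion condition on $\Psi_x$ near $\phi(x)$ is precisely the classical statement that $\Psi$ is an immersion at $x$. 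Consequently $\Psi$ is a genuine smooth injective immersion that is also proper, and a proper injective immersion of a manifold into the locally compact Hausdorff space $\mathbb{R}^{2n+1}$ is automatically a smooth embedding. This produces the desired proper smooth embedding of $M$ into $\mathbb{R}^{2n+1}$, completing the argument.
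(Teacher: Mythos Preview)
Your proposal is correct and follows essentially the same route as the paper: view $M$ as a subcartesian space of structural dimension $n$, invoke Theorem~\ref{thm1} with $m=2n+1$, and then extract from the proof of that theorem (via Lemmas~\ref{lem5}, \ref{lem6}, and \ref{lem12}) that the resulting $\Psi$ is smooth, proper, injective, and an immersion in the classical sense, hence a proper smooth embedding. Your write-up is in fact more careful than the paper's on two points the paper glosses over---namely, deducing second countability from paracompactness plus connectedness, and explaining why the local-extension immersion condition coincides with the classical immersion condition when the chart image is open in $\mathbb{R}^n$.
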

\begin{proof}
Since a smooth manifold is a subcartesian space with differential structure given by all the smooth functions on $M$, it follows from Theorem \ref{thm1} that there exists a proper injective mapping $\Psi=(\Psi_1, \cdots, \Psi_{2n+1}): M \rightarrow \mathbb{R}^{2n+1}$ that is smooth in the sense of differential spaces. Let $(x_1, \cdots, x_{2n+1})$ be coordinates for $\mathbb{R}^{2n+1}$, it follows that $\Psi_i=\Psi^*x_i$  is a smooth function on $M$. That is, $\Psi$ is smooth in the sense of manifolds. From the proof of Lemma \ref{lem5} and Lemma \ref{lem6} it can also be ensured that $\Psi$ is an immersion. Lemma \ref{lem12} ensures that $\Psi: M \rightarrow \Psi(M)$ is a homeomorphism. It follows that $\Psi$ is an embedding. Hence the result follows immediately.
\end{proof}
\section{Smooth generalized distributions on subcartesian spaces}
\label{sec:3}
\begin{theorem}\cite{5}\label{thm4}
Let $S$ be a differential subspace of $\mathbb{R}^n$, and let $X$ be a derivation of $C^{\infty}(S)$. For each $x \in S \subseteq \mathbb{R}^n$,
there exist a neighborhood $U$ of $x$ in $\mathbb{R}^n$ and a vector field $Y$ on $\mathbb{R}^n$ such that
\begin{equation}\label{eq:1}
X(F|S)|U\cap S=(Y(F))|U\cap S
\end{equation}
for every $F \in C^{\infty}(\mathbb{R}^n)$.
\end{theorem}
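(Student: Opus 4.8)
The plan is to realize the abstract derivation $X$ as the restriction of an ordinary first-order differential operator on $\mathbb{R}^n$, whose coefficients are local extensions of the values of $X$ on the coordinate functions. Write $x_1,\dots,x_n$ for the coordinate functions on $\mathbb{R}^n$; their restrictions $x_i|S$ belong to $C^{\infty}(S)$, so $y^i:=X(x_i|S)\in C^{\infty}(S)$. Because $S$ carries the differential structure inherited from $\mathbb{R}^n$, each $y^i$ admits, near the given point $x$, a smooth extension to a function on an open neighborhood of $x$ in $\mathbb{R}^n$; intersecting the $n$ neighborhoods produces a single $U$ on which all extensions $\tilde y^i$ are defined and smooth, and I set $Y=\sum_{i=1}^n \tilde y^i\,\partial/\partial x_i$, a vector field on $U$.

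Two elementary facts about $X$ are needed at the outset. First, $X$ annihilates constants: from $X(1)=X(1\cdot 1)=2X(1)$ one gets $X(1)=0$, hence $X(c)=0$ for every constant $c$. Second, $X$ is local, in the sense that its value at a point depends only on the germ of its argument there: if two functions in $C^{\infty}(S)$ agree on an open set $V\subseteq S$, their difference vanishes on $V$, so by Lemma \ref{lem15} the image of that difference under $X$ vanishes on $V$ as well. These two properties let me compute $X(F|S)$ pointwise by means of a Taylor expansion.

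The core computation proceeds one base point at a time. Fix $p\in U\cap S$ and $F\in C^{\infty}(\mathbb{R}^n)$. By Hadamard's lemma there is a convex neighborhood of $p$ on which $F(z)=F(p)+\sum_{i=1}^n (z_i-p_i)\,h_i(z)$ with each $h_i$ smooth and $h_i(p)=\partial F/\partial x_i(p)$; multiplying by a bump function I may assume the $h_i$ are globally smooth on $\mathbb{R}^n$ while this identity persists on a smaller neighborhood of $p$. Restricting to $S$ and using the locality of $X$ together with the Leibniz rule and $X(\text{constant})=0$, one obtains on that neighborhood
\[
X(F|S)=\sum_{i=1}^n X(x_i|S)\,(h_i|S)+\sum_{i=1}^n (x_i|S-p_i)\,X(h_i|S).
\]
Evaluating at $p$, every factor $x_i|S-p_i$ vanishes, so the second sum drops out and
\[
X(F|S)(p)=\sum_{i=1}^n y^i(p)\,\frac{\partial F}{\partial x_i}(p)=\sum_{i=1}^n \tilde y^i(p)\,\frac{\partial F}{\partial x_i}(p)=Y(F)(p).
\]
Since $p\in U\cap S$ was arbitrary and the vector field $Y$ does not depend on $p$, this yields the asserted identity $X(F|S)|U\cap S=(Y(F))|U\cap S$ for every $F$.

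The only delicate point I foresee is the legitimacy of treating the Hadamard coefficients $h_i$, which a priori live only on a neighborhood of $p$, as genuine elements of $C^{\infty}(S)$ to which $X$ may be applied, and of arguing that the pointwise value at $p$ is unchanged by the bump-function globalization. This is exactly what the locality of $X$ (Lemma \ref{lem15}) and the local smooth-extension property of the inherited differential structure are there to guarantee; once the $h_i$ are globalized the Leibniz computation above is purely formal, and the single vector field $Y$ works uniformly over $U$ because its coefficients $\tilde y^i$ were fixed once and for all as extensions of $X(x_i|S)$.
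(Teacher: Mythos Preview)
The paper does not supply its own proof of this theorem; it is quoted verbatim from \cite{5} (\'Sniatycki's monograph, where it appears as Theorem~3.1.6) and is used as a black box in the proof of Theorem~\ref{thm6}. So there is no in-paper argument to compare against.

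That said, your proof is correct and is essentially the standard argument one finds in \cite{5}: apply the derivation to the restricted coordinate functions to produce the coefficients $y^i=X(x_i|S)$, extend them locally to smooth functions $\tilde y^i$ on a neighborhood $U$ in $\mathbb{R}^n$, and then use Hadamard's lemma together with the Leibniz rule and locality (Lemma~\ref{lem15}) to identify $X(F|S)$ with $\sum_i \tilde y^i\,\partial F/\partial x_i$ on $U\cap S$. The only cosmetic mismatch with the statement is that you build $Y$ on $U$ rather than on all of $\mathbb{R}^n$; multiplying each $\tilde y^i$ by a bump function equal to $1$ near $x$ and supported in $U$, and then shrinking $U$ accordingly, gives a genuine vector field on $\mathbb{R}^n$ without changing anything on $U\cap S$.
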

\begin{theorem}\cite{3}\label{thm5}
Let $M$ be a smooth connected manifold and let $E$ be a smooth vector bundle over $M$. Let $F$ be a smooth generalized subbundle of $E$.
Then $F$ is globally finitely generated.
\end{theorem}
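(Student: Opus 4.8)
The plan is to reduce the statement to producing finitely many global smooth sections of $F$ whose values span $F_x$ at every $x \in M$, and to manufacture these by gluing local generators in a manner whose finiteness is controlled by the covering dimension of $M$. First I would record the elementary but crucial observation that the rank function $r(x)=\dim F_x$ is lower semicontinuous: given a basis $v_1,\dots,v_r$ of $F_x$, smoothness of $F$ furnishes local sections $\sigma_1,\dots,\sigma_r$ of $F$ with $\sigma_i(x)=v_i$, and since linear independence is an open condition the $\sigma_i(y)$ stay independent, so $r(y)\ge r(x)$ for $y$ near $x$. Consequently each set $\{x:r(x)\le j\}$ is closed, the generic (maximal-rank) locus is open and dense, and the singular points are precisely where the fibre dimension drops.

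The technical heart is a local finite-generation lemma: every point $x_0$ admits a neighborhood $U$ and finitely many sections $\sigma_1,\dots,\sigma_N\in\Gamma(F|_U)$ such that $\sigma_1(y),\dots,\sigma_N(y)$ span $F_y$ for \emph{every} $y\in U$, not merely at $y=x_0$. To prove this I would trivialize $E|_U\cong U\times\mathbb{R}^k$ and argue along the stratification by $r$. On the open dense top stratum $F$ restricts to an honest constant-rank subbundle, which is spanned by a local frame; the difficulty is to produce sections that continue to span across the lower strata, where $F_y$ loses dimension. Here smoothness is used decisively: every vector of every fibre is the value of a local section of $F$, and—exactly as in the model where a section is multiplied by a function vanishing on the singular locus—one arranges finitely many sections that collapse into the smaller fibres over the singular set while still spanning generically. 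This local statement is precisely the content carried out in \cite{3}, and I expect it to be the main obstacle.

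With local finite generation in hand, the globalization is governed by dimension theory. Since a smooth manifold of dimension $n$ is a subcartesian space of structural dimension $n$, Lemma \ref{lem10} applies: the open cover $\{U_{x_0}\}$ produced above admits a locally finite open refinement $\{V_\beta\}$ that splits into $n+1$ families $\mathcal{C}_0,\dots,\mathcal{C}_n$, the members of each $\mathcal{C}_i$ being pairwise disjoint. Fixing for each $\beta$ an index with $V_\beta\subseteq U_{x(\beta)}$ and restricting the corresponding local generators yields sections $\sigma^\beta_1,\dots,\sigma^\beta_N\in\Gamma(F|_{V_\beta})$ spanning $F_y$ at every $y\in V_\beta$.

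Finally I would glue within each colour using a partition of unity (Theorem \ref{thm2}). Let $\{\rho_\beta\}$ be subordinate to $\{V_\beta\}$ with $\sum_\beta\rho_\beta\equiv 1$ and $\operatorname{supp}\rho_\beta\subseteq V_\beta$, and for $0\le i\le n$ and $1\le l\le N$ set
\[ \Sigma^i_l=\sum_{\beta\in\mathcal{C}_i}\rho_\beta\,\sigma^\beta_l. \]
Because the members of $\mathcal{C}_i$ are pairwise disjoint and $\operatorname{supp}\rho_\beta\subseteq V_\beta$, at each $y$ at most one summand is nonzero, so $\Sigma^i_l$ is a well-defined global smooth section of $E$ lying in $F$, every fibre $F_y$ being a linear subspace. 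At any $y\in M$ some $\rho_\beta(y)>0$; that $\beta$ lies in a unique colour $i$, and there $\Sigma^i_l(y)=\rho_\beta(y)\,\sigma^\beta_l(y)$, whence the $\Sigma^i_l(y)$ of that single colour already span $F_y$. Thus the $(n+1)N$ sections $\{\Sigma^i_l\}$ generate $F$ globally, proving the theorem. The role of the finite covering dimension is exactly to bound by $n+1$ the number of disjoint families, keeping the total number of generators finite: a single global partition of unity would mix the local frames at overlaps and destroy spanning, whereas summing over one disjoint family leaves each local frame intact.
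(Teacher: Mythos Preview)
The paper does not supply its own proof of this statement: Theorem~\ref{thm5} is quoted verbatim from \cite{3} and invoked as a black box in the proofs of Theorem~\ref{thm6} and the final theorem of Section~\ref{sec:4}. There is therefore nothing in the paper to compare your proposal against.

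That said, your outline is recognisably the strategy of \cite{3} itself: a local finite-generation lemma, followed by a colouring argument via covering dimension (your appeal to Lemma~\ref{lem10} is legitimate since a manifold is subcartesian), followed by gluing within each colour by a partition of unity. Two points deserve flagging. First, the local lemma---that near each $x_0$ finitely many smooth sections of $F$ span $F_y$ for \emph{all} nearby $y$, not just at $x_0$---is the entire substance of the theorem, and you explicitly defer it back to \cite{3}; as an independent proof this is circular, and your informal description (``sections that collapse into the smaller fibres'') does not indicate how one actually constructs them. Second, your globalization tacitly assumes the same $N$ works at every $x_0$: you write $\sigma_1^\beta,\dots,\sigma_N^\beta$ with $N$ independent of $\beta$, and conclude with $(n+1)N$ global generators. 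If $N=N(x_0)$ were unbounded this count would be meaningless. In \cite{3} the uniform bound is a byproduct of the local analysis (ultimately controlled by the rank $k$ of $E$), so once more the omitted lemma is where the real work resides.
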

\begin{theorem}\label{thm6}
Let $S$ be a subcartesian space. Let $D$ be a smooth generalized distribution on $S$. Then $D$ is globally finitely generated.
\end{theorem}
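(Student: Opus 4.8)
The plan is to use the embedding theorem to move $D$ into a Euclidean tangent bundle, invoke the manifold result Theorem \ref{thm5} there, and then restrict the resulting generators back to $S$.

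\emph{Reduction to $\mathbb{R}^m$.} By Theorem \ref{thm1} I fix a diffeomorphism of $S$ onto a subset of $\mathbb{R}^m$ with $m\ge 2n+1$ and identify $S$ with its image, so that near each point every element of $C^\infty(S)$ is the restriction of a function in $C^\infty(\mathbb{R}^m)$. For $x\in S$ define $\iota_x\colon T_xS\to T_x\mathbb{R}^m$ by $\iota_x(v)(F)=v(F|S)$ for $F\in C^\infty(\mathbb{R}^m)$. Because every germ at $x$ of a smooth function on $S$ is the restriction of one on $\mathbb{R}^m$ and derivations are local (Lemma \ref{lem15}), $\iota_x$ is injective; thus $D_x$ is realized as a subspace $\iota_x(D_x)\subseteq T_x\mathbb{R}^m$.

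\emph{Construction of a subbundle $F$ on $\mathbb{R}^m$.} View $E=T\mathbb{R}^m$ as a smooth vector bundle over the connected manifold $\mathbb{R}^m$. For each $x\in S$ and each $v\in D_x$, smoothness of $D$ (Definition \ref{def1}) gives a local section $X^{x,v}$ of $D$ with $X^{x,v}(x)=v$; after multiplying by a bump function (Theorem \ref{thm2}) I may treat $X^{x,v}$ as a derivation of $C^\infty(S)$ whose pointwise values still lie in $D$, and then Theorem \ref{thm4} produces a vector field $Y^{x,v}$ on an open $U^{x,v}\subseteq\mathbb{R}^m$ with $X^{x,v}(F|S)=Y^{x,v}(F)$ on $U^{x,v}\cap S$. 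Unwinding this identity gives $Y^{x,v}(y)=\iota_y\bigl(X^{x,v}(y)\bigr)\in\iota_y(D_y)$ for $y\in U^{x,v}\cap S$. I then set $F_z=\operatorname{span}\{Y^{x,v}(z):z\in U^{x,v}\}$ and $F=\bigcup_z F_z$. This $F$ is a smooth generalized subbundle of $E$: any $e_z\in F_z$ is a finite combination of the $Y^{x,v}(z)$, so the corresponding finite combination of the $Y^{x,v}$ is a local section of $F$ through $e_z$, while at points lying in no $U^{x,v}$ one has $F_z=0$ and the zero section suffices. By the displayed identity, $F_x=\iota_x(D_x)\subseteq\iota_x(T_xS)$ for every $x\in S$.

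\emph{Descent of the generators.} Applying Theorem \ref{thm5} to $E=T\mathbb{R}^m$ over $\mathbb{R}^m$ yields finitely many global vector fields $\sigma_1,\dots,\sigma_l$ with $\operatorname{span}\{\sigma_i(z)\}=F_z$ for all $z$. Since $\sigma_i(x)\in F_x\subseteq\iota_x(T_xS)$ for every $x\in S$, each $\sigma_i$ is tangent to $S$ along $S$, so $\bar\sigma_i(g):=\sigma_i(G)|S$, with $G$ any local extension of $g\in C^\infty(S)$, is independent of the choice of $G$, lies in $C^\infty(S)$, and defines a derivation $\bar\sigma_i$ of $C^\infty(S)$ with $\iota_x(\bar\sigma_i(x))=\sigma_i(x)$. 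As $\iota_x$ restricts to a linear isomorphism $D_x\to F_x$ and the $\sigma_i(x)$ span $F_x$, the derivations $\bar\sigma_1,\dots,\bar\sigma_l$ span $D_x$ at each $x\in S$, so $D$ is globally finitely generated.

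\emph{Main difficulty.} The crux is the passage between $TS$ and $T\mathbb{R}^m$: I must both push local sections of $D$ up to honest vector fields on $\mathbb{R}^m$ (via Theorem \ref{thm4}) and pull the finitely many ambient generators back down to derivations of $C^\infty(S)$, checking at each stage that the constructions are independent of the chosen smooth extensions and that $F$ remains a \emph{smooth} subbundle across the locus where its rank drops to zero off $S$.
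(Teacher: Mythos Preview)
Your argument is correct and follows essentially the same route as the paper: embed $S$ in $\mathbb{R}^m$ via Theorem~\ref{thm1}, lift local sections of $D$ to ambient vector fields using Theorem~\ref{thm4}, take the span of these to form a smooth generalized distribution on the ambient space, invoke Theorem~\ref{thm5} there, and then restrict the finite generating set back to $S$ after checking well-definedness via Lemma~\ref{lem15}. The only cosmetic difference is that the paper carries out the manifold step on the open submanifold $\bigcup_{x,v}U^{x,v}\subseteq\mathbb{R}^m$, whereas you extend $F$ by zero to all of $\mathbb{R}^m$; both choices work, and your explicit use of the injection $\iota_x\colon T_xS\to T_x\mathbb{R}^m$ makes the descent step slightly more transparent than the paper's direct appeal to equations~(\ref{eq:3}) and~(\ref{eq:4}).
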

\begin{proof}
According to Theorem \ref{thm1}, $S$ is diffeomorphic to a subset $\bar S$ of $\mathbb{R}^m$. It remains to show that for any smooth generalized distribution $D$ on a subset $\bar S$ of $\mathbb{R}^m$, $D$ is globally finitely generated. We first construct  a neighborhood of $\bar S$ in  $\mathbb{R}^m$ with a smooth generalized distribution on it as follows: according to Definition \ref{def1}, for each $x \in \bar S$ and $v_x\in D_x$, by using bump function, there exist a derivation $X_x$ of $C^{\infty}(\bar S)$, such that $X_x$ is a smooth section of $D$ and $X_x(x)=v_x$. It follows from Theorem \ref{thm4} that there exist a neighborhood $U_{v_x}$ of $x$ and a vector field $Y_{v_x}$ on $\mathbb{R}^n$ such that
\begin{equation}\label{eq:3}
X_x(F|\bar S)|U_{v_x}\cap \bar S=(Y_{v_x}(F))|U_{v_x}\cap \bar S,
\end{equation}
for every $F \in C^{\infty}(\mathbb{R}^n)$. Now consider the open subset $\cup_{x\in \bar S}\cup_{v_x \in D_x}U_{v_x}$ of $\mathbb{R}^m$.  For each $y \in \cup_{x\in \bar S}\cup_{v_x \in D_x}U_{v_x}$, let $\Lambda$ be the set of all points in $T\bar S$ such that $U_v$ contains $y$ for every $v \in \Lambda$.
We define a generalized distribution $\hat D$ on $\cup_{x\in \bar S}\cup_{v_x \in D_x}U_{v_x}$ by
\begin{equation}\label{eq:4}
\hat D_y=\text{span}_{\mathbb{R}}\{Y_v(y)|v \in \Lambda\}.
\end{equation}
It's easy to see that $\hat D$ is smooth on the open submanifold $\cup_{x\in \bar S}\cup_{v_x \in D_x}U_{v_x}$ of $\mathbb{R}^m$. Then according to Theorem \ref{thm5}, there exist smooth sections $Y_1, \cdots, Y_l$  of $\hat D$ such that $\hat D_y$ is spanned by $Y_1(y), \cdots, Y_l(y)$ for every $y \in  \cup_{x\in \bar S}\cup_{v_x \in D_x}U_{v_x}$. We claim that $Y_1, \cdots, Y_l$ yield derivations $X_1, \cdots, X_l$ of $C^{\infty}(\bar S)$ that span $D_x$ for each $x\in \bar S$.

For each $f \in C^{\infty}(\bar S)$, for each $x \in \bar S$, there exist a neighborhood $U$ of $x$ in $\cup_{x\in \bar S}\cup_{v_x \in D_x}U_{v_x}$, and a function $F_x \in C^{\infty}(\cup_{x\in \bar S}\cup_{v_x \in D_x}U_{v_x})$, such that $f|U\cap \bar S=F_x|U\cap \bar S$. Set $X_i(f)|U\cap \bar S=Y_i(F_x)|U\cap \bar S$. We claim that $X_i(f)\in C^{\infty}(\bar S)$ is well-defined.
Let $V$ be another neighborhood of $x$ in $\cup_{x\in \bar S}\cup_{v_x \in D_x}U_{v_x}$, and let $H_x\in C^{\infty}(\cup_{x\in \bar S}\cup_{v_x \in D_x}U_{v_x})$ be a function such that $f|V\cap \bar S=H_x|V\cap \bar S$.
We have that $F_x|U\cap V\cap \bar S=H_x|U\cap V\cap \bar S$. Then it follows from Lemma \ref{lem15}, (\ref{eq:3}) and (\ref{eq:4}) that
$Y_i(F_x-H_x)|U\cap V\cap \bar S=0$. This yields that $X_i(f)$ is a well-defined smooth function on $\bar S$. Hence we get a derivation $X_i$ of $C^{\infty}(\bar S)$.
From (\ref{eq:3}) and (\ref{eq:4}) we can also see that $X_i(x) \in D_x$ for each $x \in \bar S$.

It remains to show that $X_1(x), \cdots, X_l(x)$ span $D_x$ at each $x \in \bar S$. Let $w_x\in D_x$. Then according to the above construction of
$\hat D$, we know that there exist a derivation $X$ of $C^{\infty}(\bar S)$ such that $X(x)=w_x$, a neighborhood $U_{w_x}$ of $x$ and a vector field $Y_{w_x}$ on $\mathbb{R}^m$ such that
$X(F|\bar S)|U_{w_x}\cap \bar S=(Y_{w_x}(F))|U_{w_x}\cap \bar S$, for every $F \in C^{\infty}(\mathbb{R}^m)$, where
$Y_{w_x}(x)\in \bar D_x$. So there exist constants $c_1, \cdots, c_l$ such that $Y_{w_x}(x)=\sum_{i=1}^lc_iY_i(x)$. Then it follows from the above definition of $X_i$ that $\sum_{i=1}^lc_iX_i(x)=w_x$. This completes the proof for the above claim. Hence the result of the theorem follows immediately.
\end{proof}
\section{Smooth generalized subbundles on subcartesian spaces}
\label{sec:4}
\begin{lemma}\label{lem14}
Let $S$ be a subcartesian space. Let $(E, \pi, S, \mathbb{R}^k)$ be a smooth vector bundle. Then $E$ is globally finitely generated.
\end{lemma}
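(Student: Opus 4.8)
The plan is to build the finitely many global generators out of local frames supplied by a finite coordinate representation, glued together with a partition of unity. First I would invoke Proposition \ref{prop2} to fix a finite coordinate representation $\{(V_i, \psi_i)\}_{i=1}^{m}$ of $(E, \pi, S, \mathbb{R}^k)$, where each $\psi_i : V_i \times \mathbb{R}^k \to \pi^{-1}(V_i)$ is a diffeomorphism with $\pi \circ \psi_i(x, y) = x$. The finiteness of this cover is precisely what guarantees that the generating set produced below is finite, so Proposition \ref{prop2} is the essential input. Writing $\epsilon_1, \cdots, \epsilon_k$ for the standard basis of $\mathbb{R}^k$, I set $e^i_a(x) = \psi_i(x, \epsilon_a)$ for $x \in V_i$ and $a = 1, \cdots, k$. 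Each $e^i_a$ is a smooth local section of $E$ over $V_i$, and since $\psi_i^{-1}|\pi^{-1}(x)$ is a linear isomorphism onto $\{x\} \times \mathbb{R}^k$, the vectors $e^i_1(x), \cdots, e^i_k(x)$ form a basis of the fiber $\pi^{-1}(x)$ for every $x \in V_i$.

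Next I would produce a partition of unity subordinate to the finite cover $\{V_1, \cdots, V_m\}$. Theorem \ref{thm2} yields a countable partition of unity whose supports refine $\{V_i\}$; grouping and summing these functions according to which $V_i$ contains their support gives a finite partition of unity $\{p_i\}_{i=1}^{m}$ with $\mathrm{supp}(p_i) \subseteq V_i$ and $\sum_{i=1}^{m} p_i \equiv 1$, where local finiteness of the supports is used to identify the closure of a union with the union of the closures. I would then define, for each $i$ and $a$, a global section $\sigma_{i,a} : S \to E$ by $\sigma_{i,a}(x) = p_i(x)\, e^i_a(x)$ for $x \in V_i$ and $\sigma_{i,a}(x) = 0_{\pi^{-1}(x)}$ for $x \in S \setminus \mathrm{supp}(p_i)$.

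The hard part is verifying that each $\sigma_{i,a}$ is a smooth section in the category of differential spaces, since $E$ is itself a subcartesian space. Smoothness is a local condition by condition 3 of Definition \ref{def3}, so it suffices to check it on the two open sets $V_i$ and $S \setminus \mathrm{supp}(p_i)$, which cover $S$ because $\mathrm{supp}(p_i) \subseteq V_i$. On $V_i$ the section equals $x \mapsto \psi_i\bigl(x, p_i(x)\,\epsilon_a\bigr)$, a composition of smooth maps; on $S \setminus \mathrm{supp}(p_i)$ it equals the zero section, which is smooth; and on the overlap both expressions vanish because $p_i = 0$ there. Hence $\sigma_{i,a}$ is a well-defined global smooth section of $E$.

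It then remains to check that the $mk$ sections $\{\sigma_{i,a} : 1 \le i \le m,\ 1 \le a \le k\}$ span every fiber. Fixing $x \in S$, from $\sum_i p_i(x) = 1$ there is an index $i$ with $p_i(x) > 0$; for that $i$ the vectors $\sigma_{i,1}(x), \cdots, \sigma_{i,k}(x)$ are the nonzero multiples $p_i(x) e^i_1(x), \cdots, p_i(x) e^i_k(x)$ of a basis of $\pi^{-1}(x)$, so they already span $E_x$. As $x$ is arbitrary, the finite family $\{\sigma_{i,a}\}$ globally generates $E$, which proves the lemma.
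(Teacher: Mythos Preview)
Your proof is correct and follows essentially the same route as the paper's: invoke Proposition \ref{prop2} to obtain a finite trivializing cover, use a partition of unity subordinate to it to globalize the local frames $e^i_a$, and observe that at each point some coefficient is nonzero so the resulting $mk$ sections span the fiber. The only cosmetic difference is that you first collapse the countable partition of unity from Theorem \ref{thm2} into a finite one $\{p_i\}_{i=1}^m$ before multiplying, whereas the paper keeps the countable family $\{\rho_l\}$ and sums inside the definition $\sigma_{ij}=\sum_{l\in I_i}\rho_l\,\gamma_{ij}$; your extra care in verifying smoothness of the extended sections is a welcome detail the paper omits.
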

\begin{proof}
It follows from Proposition \ref{prop2} that $(E, \pi, S, \mathbb{R}^k)$ admits a finite coordinate representation $\{(U_i, \psi_i)|i=1, \cdots, m\}$.
Then on each $U_i$, there exist local sections $\gamma_{i1}, \cdots, \gamma_{ik}$ of $(E, \pi, S, \mathbb{R}^k)$  such that $\pi^{-1}(x)=\text{span}_{\mathbb{R}}\{\gamma_{i1}(x), \cdots, \gamma_{ik}(x)\}$ for each $x \in U_i, i=1, \cdots, m$. Let $\{\rho_j|j=1, 2,\cdots\}$ be the partition of unity for $S$ subordinate to the covering $\{U_i|i=1, \cdots, m\}$. For $i\in \{1, 2, \cdots, m\}$, let \[I_i=\{l \in \{1,2, \cdots\}|\text{the support of}\, \rho_l \,\text{is contained in} \,U_i\}.\]
Define global sections $\sigma_{ij}$ by
\[\sigma_{ij}=\sum_{l\in I_i}\rho_{l}\gamma_{ij},\]
for $i=1, \cdots, m$ and  $j=1, \cdots, k$.  Then for each $e_x \in \pi^{-1}(x)$, there exists some $\rho_i$ such that $\rho_i(x)\neq 0$ and the support of $\rho_i$ is contained in some $U_j$. It follows that there exist constants $c_1, \cdots, c_k$ such that $e_x=\sum_{l=1}^kc_l\sigma_{jl}(x)$. Hence the result follows.
\end{proof}
\begin{proposition}\label{prop3}
For every smooth vector bundle $(E, \pi, S, \mathbb{R}^k)$ there exist a trivial bundle $S \times \mathbb{R}^l$, for some $l \in \mathbb{N}$ and a
smooth injective bundle map $\varphi: E \rightarrow S \times \mathbb{R}^l$.
\end{proposition}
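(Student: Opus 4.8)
The plan is to realize $E$ as a subbundle of a trivial bundle by pairing vectors against a finite spanning family of global sections, using a Riemannian metric to turn the pairing into coordinates. First I would invoke Lemma~\ref{lem14} to obtain global smooth sections $\sigma_1, \ldots, \sigma_l$ of $(E, \pi, S, \mathbb{R}^k)$ whose values $\sigma_1(x), \ldots, \sigma_l(x)$ span the fiber $\pi^{-1}(x)$ for every $x \in S$. Next, by Proposition~\ref{prop4}, I would fix a smooth Riemannian metric $g$ on $E$, so that $g(x)$ is a positive definite inner product on $\pi^{-1}(x)$ for each $x$.

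With these in hand, I would define the bundle map $\varphi: E \rightarrow S \times \mathbb{R}^l$ by
\[
\varphi(e) = \bigl(\pi(e),\, g(\pi(e))(e, \sigma_1(\pi(e))), \ldots, g(\pi(e))(e, \sigma_l(\pi(e)))\bigr).
\]
By construction $\varphi$ covers the identity on $S$ and restricts to a linear map on each fiber $\pi^{-1}(x)$, so it is a bundle map. Injectivity then follows from a short linear-algebra argument: if $\varphi(e) = \varphi(e')$, then $\pi(e) = \pi(e') =: x$ and $g(x)(e - e', \sigma_i(x)) = 0$ for every $i$; since the $\sigma_i(x)$ span $\pi^{-1}(x)$, the vector $e - e'$ is $g(x)$-orthogonal to all of $\pi^{-1}(x)$, and positive definiteness of $g(x)$ forces $e = e'$. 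I would take $l$ to be precisely the number of generators furnished by Lemma~\ref{lem14}.

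The main obstacle is verifying that $\varphi$ is smooth in the category of differential spaces, that is, that each component function $e \mapsto g(\pi(e))(e, \sigma_i(\pi(e)))$ lies in $C^{\infty}(E)$. The definition of a smooth Riemannian metric only asserts smoothness of $x \mapsto g(x)(\tau(x), \tau(x))$ along smooth sections $\tau$, so the fiber variable must be handled separately. I would work in a local decomposition $\psi_\alpha: U_\alpha \times \mathbb{R}^k \rightarrow \pi^{-1}(U_\alpha)$: pulling $g$ and the $\sigma_i$ back through $\psi_\alpha$ expresses each component as $(x,v) \mapsto \langle G_\alpha(x) v, s_i^\alpha(x)\rangle$ for a smooth matrix-valued function $G_\alpha$ (the local Gram matrix of $g$) and smooth $\mathbb{R}^k$-valued functions $s_i^\alpha$ representing the $\sigma_i$. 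This is manifestly smooth in $(x,v)$, hence, by condition~2 of Definition~\ref{def3} together with the local product structure, a smooth function on $\pi^{-1}(U_\alpha)$. Condition~3 of Definition~\ref{def3} then patches these local smooth functions into a global element of $C^{\infty}(E)$, so $\varphi$ is smooth and the construction is complete.
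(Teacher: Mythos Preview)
Your argument is correct and actually more direct than the paper's. Both proofs start from the same two ingredients—Lemma~\ref{lem14} for global generators $\sigma_1,\ldots,\sigma_l$ and Proposition~\ref{prop4} for a Riemannian metric—but the paper proceeds by first writing down the \emph{surjection} $\psi:S\times\mathbb{R}^l\to E$, $\psi(x,\lambda)=\sum_i\lambda_i\sigma_i(x)$, equipping both bundles with metrics, and then taking the Moore--Penrose-type right inverse $\varphi_x=\psi_x^*(\psi_x\psi_x^*)^{-1}$; injectivity follows from $\psi\circ\varphi=\mathrm{id}$. Your map is precisely the adjoint $\psi_x^*$ itself (with the standard metric on $\mathbb{R}^l$), and you observe directly that $\psi_x^*$ is injective because the $\sigma_i(x)$ span the fiber. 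This avoids inverting the operator $\psi_x\psi_x^*$ and the attendant smoothness check for that inverse. The paper's construction buys the extra property $\psi\circ\varphi=\mathrm{id}$, i.e.\ an honest splitting of the surjection, but that is not needed for the proposition as stated. Your explicit verification of smoothness via local trivializations and the Gram matrix $G_\alpha$ is also more careful than the paper, which simply asserts smoothness of its $\varphi$; note that the smoothness of the entries of $G_\alpha$ follows from the definition of a smooth Riemannian metric by polarization applied to the local frame sections.
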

\begin{proof}
According to  Lemma \ref{lem14}, we know that  there exist finite global generators $\sigma_1, \cdots, \sigma_{l}$ for $E$. Consider the trivial
bundle $S \times \mathbb{R}^{l}$ and the bundle map $\psi: S\times \mathbb{R}^l \rightarrow E$ given by
\[\psi(x, \sum_{i=1}^l\lambda_ie_i)= \sum_{i=1}^l\lambda_i\sigma_i(x), x \in S,\]
where $e_1, \cdots, e_l$ is a basis for $\mathbb{R}^l$. Then each linear map $\psi_x:\mathbb{R}^l\rightarrow \pi^{-1}(x)$ is surjective. From Proposition \ref{prop4}, there exist smooth Riemannian metrics on $(E, \pi, S, \mathbb{R}^k)$ and $(S\times \mathbb{R}^l, \pi, S, \mathbb{R}^l)$.
Define $\varphi: E\rightarrow S \times \mathbb{R}^l$ by $\varphi_x=\psi_x^*(\psi_x\circ\psi_x^*)^{-1}, x\in S$, where $\psi_x^*$ is the adjoint map of $\psi_x$ associated with the smooth Riemannian metrics on $(E, \pi, S, \mathbb{R}^k)$ and $(S\times \mathbb{R}^l, \pi, S, \mathbb{R}^l)$.
$\varphi$ is a smooth bundle map satisfying that $\psi\circ \varphi=\text{identity}$. Hence $\varphi$ is injective.
\end{proof}
\begin{theorem}
Let $S$ be a  subcartesian space. Let  $(E, \pi, S, \mathbb{R}^k)$  be a smooth vector bundle. Let $F$ be a smooth generalized subbundle of $E$. Then $F$ is globally finitely generated.
\end{theorem}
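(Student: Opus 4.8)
The plan is to reduce, in two stages, to the manifold subbundle result Theorem~\ref{thm5}: first from an arbitrary vector bundle to a trivial one, and then from the subcartesian base to an ambient open subset of Euclidean space, paralleling the proof of Theorem~\ref{thm6}.

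First I would invoke Proposition~\ref{prop3} to obtain a trivial bundle $S\times\mathbb{R}^l$ together with a smooth injective bundle map $\varphi\colon E\to S\times\mathbb{R}^l$ whose fibrewise left inverse is the smooth bundle map $\psi\colon S\times\mathbb{R}^l\to E$ with $\psi\circ\varphi=\mathrm{id}$. Set $G:=\varphi(F)$. Since each $\varphi_x$ is an injective linear map, $G_x=\varphi_x(F_x)$ is a subspace of $\{x\}\times\mathbb{R}^l$, so $G$ is a generalized subbundle of $S\times\mathbb{R}^l$; it is smooth because for $e_x\in F_x$ a local smooth section $\sigma$ of $F$ through $e_x$ yields the local smooth section $\varphi\circ\sigma$ of $G$ through $\varphi(e_x)$. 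The point of this reduction is that if I can produce finitely many global smooth sections $\tau_1,\dots,\tau_p$ of $S\times\mathbb{R}^l$ taking values in $G$ and spanning every $G_x$, then each $\tau_i$ takes values in $G=\varphi(F)$, so $\sigma_i:=\psi\circ\tau_i$ satisfies $\sigma_i(x)=\psi_x(\tau_i(x))=\varphi_x^{-1}(\tau_i(x))\in F_x$ and $\tau_i=\varphi\circ\sigma_i$; the $\sigma_i$ are global smooth sections of $F$, and injectivity of each $\varphi_x$ transfers the spanning property from $G_x$ to $F_x$. Thus it suffices to treat the trivial bundle.

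For the trivial bundle $S\times\mathbb{R}^l$ I would mirror the argument of Theorem~\ref{thm6}. By Theorem~\ref{thm1} identify $S$ with a subset $\bar S\subseteq\mathbb{R}^m$. For each $x\in\bar S$ and $e_x\in G_x$, smoothness of $G$ gives a local smooth section, i.e. a smooth map $s\colon U\cap\bar S\to\mathbb{R}^l$ with $s(x)=e_x$ and $s(y)\in G_y$; here, in contrast to Theorem~\ref{thm6}, I do not need the derivation-to-vector-field correspondence of Theorem~\ref{thm4}, because a smooth $\mathbb{R}^l$-valued map on $\bar S$ is componentwise locally the restriction of a smooth map $\tilde s\colon U_{e_x}\to\mathbb{R}^l$ on an open neighborhood $U_{e_x}$ of $x$ in $\mathbb{R}^m$. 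On the open set $\Omega:=\bigcup U_{e_x}$ I would then define a generalized subbundle $\hat G$ of $\Omega\times\mathbb{R}^l$ by $\hat G_y=\text{span}_{\mathbb{R}}\{\tilde s_v(y): y\in U_v\}$, which is smooth on the manifold $\Omega$ by construction. Passing to the connected component $\Omega_0$ of $\Omega$ containing the connected set $\bar S$ and applying Theorem~\ref{thm5} to $\hat G$ over $\Omega_0$ yields finitely many global smooth sections $T_1,\dots,T_p\colon\Omega_0\to\mathbb{R}^l$ spanning $\hat G$ everywhere. Restricting, $T_i|_{\bar S}$ is smooth in the subcartesian sense, and I would check that $\hat G_y=G_y$ for $y\in\bar S$, whence the $T_i|_{\bar S}$ are sections of $G$ spanning each $G_y$; these are the desired $\tau_i$.

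The main obstacle I anticipate is the bookkeeping around $\hat G$: verifying that it is a genuine smooth generalized subbundle of $\Omega_0\times\mathbb{R}^l$ and, above all, that its restriction to $\bar S$ recovers $G$ exactly. The inclusion $\hat G_y\subseteq G_y$ for $y\in\bar S$ needs each extended spanning vector $\tilde s_v(y)=s_v(y)$ to still lie in $G_y$ on $\bar S$, while the reverse inclusion needs every $e_x\in G_x$ to be realized as $\tilde s_{e_x}(x)$. A further technical point is that $T_i|_{\bar S}$ is smooth as a map of differential spaces, which follows because the differential structure of $\bar S$ is generated by restrictions of smooth functions on $\mathbb{R}^m$, together with Lemma~\ref{lem15} for well-definedness where local extensions are patched. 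Once these are in place, pulling back along $\psi$ as in the first step finishes the proof.
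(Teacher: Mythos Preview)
Your proposal is correct and follows essentially the same two-step strategy as the paper: reduce to a trivial bundle via Proposition~\ref{prop3}, embed $S$ into $\mathbb{R}^m$ via Theorem~\ref{thm1}, extend local sections to an open neighborhood, build an auxiliary smooth generalized subbundle there, and invoke Theorem~\ref{thm5}. You are in fact more explicit than the paper on a few points---spelling out the $\varphi/\psi$ transfer between $F$ and $G$, passing to the connected component $\Omega_0$ before applying Theorem~\ref{thm5}, and isolating the verification $\hat G_y=G_y$ for $y\in\bar S$---but these are refinements of the same argument, not a different route.
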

\begin{proof}
According to Proposition \ref{prop3}, it suffices to show that every smooth generalized subbundle of the trivial bundle $S\times \mathbb{R}^l$ is globally finitely generated. We identify each
subspace $F(\pi^{-1}(x))$ with a subspace of $\mathbb{R}^l$ and identify sections of $S\times \mathbb{R}^l$, and hence sections of
$F$, with $R^l$-valued functions. Further, it follows from Theorem \ref{thm1} that $S$ is diffeomorphic to a subset of $\mathbb{R}^m$.
Hence for each $e_x \in F(\pi^{-1}(x))$, since $F$ is smooth, there exist a neighborhood $U_{e_x}$ of $x$ in  $\mathbb{R}^m$ and a smooth section $\xi_{U_{e_x}}$ satisfying that $\xi_{U_{e_x}}(x)=e_x$ and $\xi_{U_{e_x}}|U_{e_x}\cap S$ is a local smooth section of $F$. Now consider the open subset
$U=\cup_{x\in S}\cup_{e_x\in F(\pi^{-1}(x))}U_{e_x}$ and the  generalized subbundle $\hat F$ of $U \times \mathbb{R}^l$ defined by
\[\hat F(y)=\text{span}_{\mathbb{R}}\{\xi_{U_{e_x}}(y)|y \in U_{e_x}\}.\]
It's obvious that $\hat F$ is smooth. It follows from Theorem \ref{thm5} that there exist smooth global generators $\xi_1, \cdots, \xi_s$ of $\hat F$. We claim that $\xi_1|S, \cdots, \xi_s|S$ are smooth global generators of $F$. It follows from the definition of $\xi_{U_{e_x}}$ and $\hat F$ that
$\xi_1|S, \cdots, \xi_s|S$ are smooth sections of $F$. Besides,  since $e_x \in \hat F(x)$ for each $x \in S$ and $e_x \in F(x)$, it follows immediately that $\xi_1|S, \cdots, \xi_s|S$ are global generators of  $F$. This completes the proof of the theorem.
\end{proof}


\begin{thebibliography}{}
%

\bibitem{11}
J. P. Ortega and T. S. Ratiu,  \textit{Momentum maps and Hamiltonian reduction}. Progress in Mathematics,  222, Birkhauser Boston, Boston, MA, 2004.
https://doi.org/10.1007/978-1-4757-3811-7

\bibitem{1}
F. Bullo and A. D.  Lewis,  \textit{Geometric control of mechanical systems: modeling, analysis, and design for simple
mechanical control systems}. Texts in applied mathematics, 49, Springer-Verlag, New York, 2005.
https://doi.org/10.1007/978-1-4899-7276-7

\bibitem{3}
L. D. Drager, J. M. Lee, E. Park, and K. Richardson,\textit{ Smooth distributions are finitely generated}. Ann. Global Anal. Geom. 41 (2012), no. 3,  357-369.
https://doi.org/10.1007/s10455-011-9287-8

\bibitem{2}
A. D. Lewis, \textit{Generalised subbundles and distributions: A comprehensive review.}
Preprint, 2014. http://mast.queensu.ca/$\sim$andrew/notes/abstracts/2011a.html

\bibitem{6}
R. Sikorski, \textit{Abstract covariant derivative}. Colloq. Math., 18 (1967), no. 1, 251-272. http://doi.org/10.4064/cm-18-1-251-272

\bibitem{7}
M. Heller, \textit{Algebraic foundations of the theory of differential spaces}. Demonstr. Math, 24 (1991), no. 3,  349-364.
http://doi.org/10.1515/dema-1991-3-403

\bibitem{4}
P. Walczak, \textit{A theorem on diffeomorphisms in the category of differential
spaces}. Bull. Acad. Polon. Sci., Ser. Sci. Math. Astronom. Phys. 21 (1973), 325-329.
https://doi.org/10.1002/bjs.4563

\bibitem{5}
J. \'Sniatycki,  \textit{Differential geometry of singular spaces and reduction of symmetry}. Cambridge University Press, Cambridge, 2013.
http://doi.org/10.1017/CBO9781139136990


\bibitem{12}
J. \'Sniatycki, \textit{Orbits of families of vector fields on subcartesian spaces}. Ann. Inst. Fourier, 53 (2003), no. 7, 2257-2296.
https://doi.org/10.5802/aif.2006

\bibitem{13}
J. \'Sniatycki, \textit{Generalizations of Frobenius¡¯ Theorem on manifolds and subcartesian spaces}.  Canad. Math. Bull., 50 (2007), no. 3, 447-459.
https://doi.org/10.4153/CMB-2007-044-2

\bibitem{14}
T. Lusala, J. \'Sniatycki, and J. Watts,  \textit{Regular points of a subcartesian space}. Canad. Math. Bull., 53 (2010), no. 2, 340-346.
https://doi.org/10.4153/CMB-2010-025-0

\bibitem{15}
T. Lusala and J. \'Sniatycki, \textit{Stratified subcartesian spaces}. Canad. Math. Bull., 54 (2011), no. 4, 693-705.
http://doi.org/10.4153/CMB-2011-026-3

\bibitem{16}
R. Cushman and J. \'Sniatycki, \textit{On subcartesian spaces Leibniz¡¯ Rule implies the chain rule}.  Canad. Math. Bull., 63 (2020), no. 2, 348-357.
https://doi.org/10.4153/S0008439519000407


\bibitem{9}
J. R. Munkres,\textit{ Elementary differential topology topology}. Lectures given at Massachusetts Institute of Technology, Fall, 1961.
\bibitem{8}
M. Hirsch, \textit{Differential topology}. Graduate Texts in Mathematics, 33, Springer-Verlag, New York, 1976.
http://doi.org/10.1007/978-1-4684-9449-5
\end{thebibliography}


\end{document}